


\documentclass{article}
\setlength{\textwidth}{6in}
\setlength{\oddsidemargin}{.25in}
\setlength{\topmargin}{-.50in}
\setlength{\textheight}{8.75in}
\setcounter{secnumdepth}{0}
\usepackage{amsmath}
\usepackage{amssymb}
\setcounter{secnumdepth}{1}
\usepackage{amsmath}
\usepackage{amssymb}
\usepackage{amsthm}
\usepackage{ifthen}

\newcommand{\comment}[1]{} 

\newcommand{\estr}{} 
\newcommand{\lcurl}{\{} 
\newcommand{\rcurl}{\}} 
\newcommand{\Ls}[2][\infty]{\ifthenelse{\equal{#2}{\estr}}{L^{#1}}{L^{#1}(#2)}} 
\newcommand{\pref}[1]{(\ref{#1})}

\newcommand{\ale}[1][\ ]{a.e.#1}

\newcommand{\wrt}[1][ ]{with respect to #1}

\newcounter{spare} 
\newcommand{\srndt}[2][0]{
\setcounter{spare}{#1}\gdef\lf{#2}\ifthenelse{\equal{\lf}{(}}{\gdef\rt{)}}{\ifthenelse{\equal{\lf}{[}}{\gdef\rt{]}}{\ifthenelse{\equal{\lf}{|}}{\gdef\rt{|}}{\ifthenelse{\equal{\lf}{\lcurl}}{\gdef\rt{\rcurl}}{\gdef\rt{.}}}}}\ifthenelse{\value{spare}=0}{\gdef\ldel{\lf}\gdef\rdel{\rt}}{\ifthenelse{\value{spare}=1}{\gdef\ldel{\bigl\lf}\gdef\rdel{\bigr\rt}}{\ifthenelse{\value{spare}=2}{\gdef\ldel{\Bigl\lf}\gdef\rdel{\Bigr\rt}}{\ifthenelse{\value{spare}=3}{\gdef\ldel{\biggl\lf}\gdef\rdel{\biggr\rt}}{\gdef\ldel{\Biggl\lf}\gdef\rdel{\Biggr\rt}}}}}} 

\theoremstyle{plain}
\newtheorem{theorem}{Theorem}[section]
\newtheorem{lemma}[theorem]{Lemma}
 \newtheorem*{defin}{Definition} 

\newtheorem{cor}[theorem]{Corollary}
\newtheorem{prop}[theorem]{Proposition}

\theoremstyle{remark}

\theoremstyle{definition}

\newtheorem{definition}[theorem]{Definition}
\newtheorem{remark}[theorem]{Remark}



\DeclareMathOperator{\ad}{ad}
\DeclareMathOperator{\Ad}{Ad}
\DeclareMathOperator{\Der}{Der}
\DeclareMathOperator{\Ev}{Ev}
\DeclareMathOperator{\GL}{GL}
\DeclareMathOperator{\Gr}{Gr}

\DeclareMathOperator{\Lie}{Lie}


\begin{document}
\title{ INVARIANT RIGID GEOMETRIC STRUCTURES \\
AND SMOOTH PROJECTIVE FACTORS}
\author{Amos Nevo\thanks{Supported by the Institute for Advanced Study, Princeton and ISF grant \# 975-05.  }\\
        Technion IIT\\
        anevo\char64 tx.technion.ac.il \and Robert J. Zimmer
 \\ University of Chicago \\
 president\char64 uchicago.edu}


\maketitle
\begin{abstract} 
 
 \footnotetext{{\it 1991 Mathematics subject classification:}
 22D40, 28D15, 47A35, 57S20, 58E40, 60J50.\newline {\it Key 
 words and phrases}: semi-simple Lie groups, parabolic subgroups,
stationary measure, rigid analytic structure.}

We consider actions of non-compact simple Lie groups preserving an analytic rigid geometric structure of algebraic type on a compact manifold.  The structure is not assumed to be unimodular, so an invariant measure may not exist.  Ergodic stationary measures always exist, and when such a measure has full support, we show the following. 

 1)  Either the manifold admits a smooth equivariant map onto a homogeneous projective variety, defined on an open dense conull invariant set, or the Lie algebra of the Zariski closure of the Gromov representation of the fundamental group contains a Lie subalgebra isomorphic to the Lie algebra of the acting group. As a corollary, a smooth non-trivial homogeneous projective factor does exist whenever the fundamental group of $M$ admits only virtually solvable linear representations, and thus in particular when $M$ is simply connected, regardless of the real rank. 

2)  There exist explicit examples showing that analytic rigid actions of certain simple groups 
(of real rank one) may indeed fail to have a smooth projective factor.  

 3) It is possible to generalize Gromov's theorem on the algebraic hull of the  representation of the fundamental group of the manifold to the case of analytic rigid non-unimodular structures, for actions of simple groups of any real rank. 

An important ingredient in the proofs is a generalization of Gromov's centralizer theorem beyond the case of invariant measures.

 \end{abstract}


\section{Introduction and statements of main results} 
Our purpose here is to study analytic actions of simple Lie groups 
on manifolds which preserve a rigid analytic geometric structure of algebraic type. These structures 
  have been introduced in the  foundational work \cite{g1} as a generalization of Cartan's notion of structures of finite type.  
  The study of rigid structures, both for their own sake and also in connection with rigidity theory for group actions   \cite{zproc} has attracted 
quite a bit  of attention 
(see e.g. \cite{cqb} \cite{feres}\cite{fz}\cite{qb} and references therein). We note however that  
most of the work carried out on group actions preserving rigid structures concentrated on the case 
of unimodular structures. This assumption is used extensively in a number of considerations, particularly in providing an invariant volume form and thus an invariant measure on the manifold, to which considerations related to Borel density theorem and its generalizations can be applied. However, 
such an assumption leaves out some very interesting and natural examples, such as conformal 
pseudo-Riemannian and other conformal structures (see \cite{bn} and \cite{frze} for some recent work in this direction). 
  
Here we consider general, not neceessarily unimodular structures, dispensing with the assumption of an invariant volume form, and assuming only the existence of an ergodic  stationary measure of full support. 
 We thus introduce the following :
 
 \begin{defin}{\bf Rigid analytic stationary systems}. 
  A rigid analytic stationary system $(M,\omega,G,\nu)$ is a compact connected analytic 
 manifold, with rigid analytic geometric structure of algebraic type $\omega$,  a connected non-compact almost simple Lie subgroup  $G\subset  \text{Aut}(M,\omega)$ with finite fundamental group,  and an ergodic probability measure $\nu$ of full support on $M$, which is stationary under an admissible measure $\mu$ on $G$  (we suppress $\mu$ in the notation).
 \end{defin}
 
 Our first main result is then as follows :

\begin{theorem}\label{main}{\bf Fundamental group and the existence of a smooth projective factor.}

Let $(M,\omega,G,\nu)$ be a rigid analytic stationary system. Then one of the following holds :  
 \begin{enumerate}
 \item 
 There exists a generic (i.e., open dense co-null) $G$-invariant subset $M_0 \subseteq M$ 
and a smooth $G$-equivariant 
map $M_0 \rightarrow G/Q$, where $Q$ is a proper parabolic subgroup.
\item The Lie algebra of the Zariski closure of the image of the fundamental group under the Gromov representation contains a Lie subalgebra isomorphic to $\mathfrak g$. 
\end{enumerate}
In particular,  a rigid analytic stationary system admits a non-trivial generic smooth homogeneous projective factor   
if the fundamental group of $M$ admits only virtually solvable linear representations.  
\end{theorem}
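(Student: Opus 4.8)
The plan is to obtain the final assertion as an immediate consequence of the dichotomy in parts (1)--(2) of the theorem: all that is needed is to rule out alternative (2) under the standing hypothesis that every linear representation of $\pi_1(M)$ has virtually solvable image.

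First I would record that the Gromov representation is a homomorphism $\rho$ of $\pi_1(M)$ into an algebraic group, hence --- after embedding that group into some $\GL_n$ --- a linear representation of $\pi_1(M)$; by hypothesis its image $\rho(\pi_1(M))$ is therefore virtually solvable. Next I would invoke the standard fact that the Zariski closure $H$ of a virtually solvable subgroup of an algebraic group is again virtually solvable: if $\Gamma_0\le\rho(\pi_1(M))$ is solvable of finite index, then its Zariski closure is solvable (the Zariski closure of a solvable group is solvable) and of finite index in $H$, so that the identity component $H^{\circ}$ is solvable. Consequently $\Lie(H)$ is a solvable Lie algebra.

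Now $\mathfrak g$, being the Lie algebra of a non-compact almost simple Lie group, is simple and non-abelian, hence not solvable, while every subalgebra of a solvable Lie algebra is solvable. Therefore $\Lie(H)$ cannot contain a subalgebra isomorphic to $\mathfrak g$, that is, alternative (2) of the theorem fails. Hence alternative (1) holds: there is a generic $G$-invariant subset $M_0\subseteq M$ and a smooth $G$-equivariant map $M_0\to G/Q$ with $Q$ a proper parabolic subgroup of $G$. Since $Q$ is a proper parabolic, $G/Q$ is a positive-dimensional generalized flag variety, in particular a non-trivial homogeneous projective variety, and the map just produced is exactly the asserted non-trivial generic smooth homogeneous projective factor. (When $M$ is simply connected the hypothesis holds vacuously, which recovers the simply connected case.)

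The real obstacle is not in this deduction, which is essentially formal once parts (1)--(2) are in hand, but in establishing the dichotomy itself. That is where the substantive work lies: deploying the theory of stationary measures and their projective factors for actions of semisimple groups, adapting Gromov's algebraic-hull and representation machinery to analytic rigid structures that need not be unimodular (so that no invariant volume, and hence no invariant measure, is available), and --- most importantly --- proving the generalization of Gromov's centralizer theorem to the setting of a stationary rather than invariant measure. It is this centralizer theorem that ultimately forces the appearance of either a parabolic quotient of $M_0$ or a copy of $\mathfrak g$ inside the Lie algebra of the algebraic hull of $\pi_1(M)$.
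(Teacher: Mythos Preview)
Your deduction of the ``In particular'' assertion from the dichotomy (1)--(2) is correct and essentially identical to the paper's own argument: the Zariski closure of a virtually solvable linear group has solvable Lie algebra, which cannot contain a copy of the simple algebra $\mathfrak g$, so alternative (2) is excluded and (1) must hold.

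However, the statement you were asked to prove includes the dichotomy itself, and you explicitly leave that unproven, offering only a high-level inventory of the ingredients. In the paper the dichotomy is not assumed but derived from Proposition~\ref{stability algebras} (Stability Lie algebras), and that proposition is where the substantive work sits. The chain is: if the generic smooth maximal projective factor is trivial, then by Proposition~\ref{3 alternatives} and Proposition~\ref{hull-Ad} the action is generically locally free and the generic smooth algebraic hull along orbits is (up to finite index) $\text{Zcl}(\Ad(G))$; Theorem~\ref{2 alternatives}, which rests on the generalized centralizer theorem (Theorem~\ref{Evl=Evh}) applied to $A_PN_P$ together with the epimorphic Lemma~\ref{epimorphic}, then forces $\psi(y)=\mathfrak g$ generically; from this one extracts, at a generic basepoint $y_0$, an injective homomorphism $\sigma:\mathfrak g\to\mathfrak z$ whose graph $\mathfrak g_\Delta\subset\mathfrak n=\mathfrak g\oplus\mathfrak z$ lies in the stability subalgebra $\mathfrak n_{y_0}$, and one shows $\ad_{\mathfrak z}(\mathfrak g_\Delta)=\mathfrak g'\cong\mathfrak g$ lies inside $\Lie(L)$, where $L$ is the generic smooth algebraic hull of the $G$-action on the $GL(\mathfrak n)$-bundle associated to the Gromov representation, and $L\subset\overline{\Gamma}$. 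Your closing paragraph names exactly these tools, so you clearly see the architecture; but the proposal as written does not carry out the argument, and the paper does not treat the dichotomy as a black box---its proof of Theorem~\ref{main} is precisely the invocation of Proposition~\ref{stability algebras}, which in turn is built in Sections~3--4.
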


As we shall see below, every smooth $G$-manifold with ergodic stationary measure of full support admits a unique maximal projective factor of the form $G/Q$ with the factor map defined and smooth on a generic set.  Thus Theorem \ref{main} shows that this factor is {\it non-trivial} whenever the fundamental group is amenable, or more generally does not contain a free group on two generators. 

As to the necessity of the assumptions in Theorem \ref{main}, we note that rigid analytic stationary systems do not always admit smooth projective factors, even when the stationary measure is not invariant. This is demonstrated by the following :

\begin{theorem}\label{rank1}{\bf Examples for real rank one groups.} Let $G=SO^0(n,1)$ for some $n\ge 2$, and $\mu$ an admissible measure on $G$. Then there exists an analytic action of $G$ on a compact analytic manifold $M$, with a rigid analytic geometric structure of algebraic type $\omega$ preserved by $G$,  such that 
\begin{enumerate}
\item $M$ has a unique $G$-ergodic $\mu$-stationary measure $\nu$, and $\nu$ is of full support on $M$ and  not $G$-invariant  ($\nu$ is in addition $P$-mixing, see \cite{nz1}).  
\item $M$ does not admit a non-trivial generic smooth projective factor. In fact it does not even admit a non-trivial measurable projective factor, for any $G$-quasi-invariant measure. In addition, the $G$-action on $M$ is everywhere locally free.  
\end{enumerate}
\end{theorem}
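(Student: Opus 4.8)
The plan is to construct $M$ as a compact quotient of a homogeneous space on which $G = SO^0(n,1)$ acts with a natural invariant rigid analytic structure, and to arrange matters so that any projective factor is forced to be trivial. The natural candidate is to let $\Gamma \subset G$ be a cocompact lattice (which exists for all $n \ge 2$), pick a larger Lie group $H \supset G$ together with a compact subgroup $C \subset H$ commuting with $G$ such that $G$ acts on $H/C$ with a $G$-invariant rigid analytic structure, and set $M = \Gamma \backslash H / C$ — or, more in the spirit of Theorem \ref{rank1}, to take an affine-type construction where $G$ acts on $\Gamma \backslash H$ on the left with $H$ chosen so that the action is everywhere locally free and carries an analytic rigid (e.g.\ affine connection or parallelism) structure that is not unimodular. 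The non-invariance of the stationary measure will come from choosing $H$ (equivalently, the embedding data) so that $G$ does not preserve the natural volume; the measure $\nu$ will be the projection of the $H$-invariant (Haar-induced) measure, which is $\mu$-stationary and, by the Nevo--Zimmer structure theory for stationary measures (cf.\ \cite{nz1}), unique and $P$-mixing once the construction is set up with $P$ acting mixingly.

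The key steps, in order: (1) Fix the cocompact lattice $\Gamma$ and the ambient group $H$ with the commuting compact group $C$; verify that $G$ acts analytically on the compact analytic manifold $M$ and preserves an analytic rigid geometric structure of algebraic type — this reduces to exhibiting the structure $H$-invariantly on $H/C$ (or $H$) and checking rigidity, which for a parallelism or a connection is standard. (2) Check that the $G$-action is everywhere locally free: this amounts to showing the relevant stabilizers in the homogeneous model are discrete, which follows from the choice of $H$ and $C$ (no nontrivial element of $\mathfrak g$ lies in the isotropy Lie algebra). (3) Identify $\nu$ as the image of the smooth $H$-invariant probability measure, show it is $\mu$-stationary and ergodic of full support, and show it is \emph{not} $G$-invariant by exhibiting the modular obstruction. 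Invoke \cite{nz1} for uniqueness and $P$-mixing. (4) Rule out projective factors: suppose there is a measurable $G$-equivariant map $\phi\colon (M,\lambda) \to G/Q$ for some $G$-quasi-invariant $\lambda$ and proper parabolic $Q$. Lift $\phi$ to a $G$-equivariant measurable map from $\Gamma\backslash H$ (or its frame bundle) to $G/Q$; using that $G/Q$ is a projective variety and that the $H$-action commutes appropriately, argue that $\phi$ must factor through the $H$-orbit structure, and then that equivariance under the (large, e.g.\ semisimple or with large automorphism group) complementary part of $H$ together with the minimality of the $G$-action on $M$ forces $\phi$ to be essentially constant — contradicting that $Q$ is proper. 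For the generic smooth case one argues similarly, or notes that a smooth factor would in particular be a measurable one.

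The main obstacle I expect is step (4): ruling out \emph{all} measurable projective factors for \emph{every} quasi-invariant measure is considerably stronger than ruling out smooth generic ones, and it is exactly where the specific geometry of the example must be exploited rather than soft arguments. Concretely one must understand the $G$-equivariant measurable maps out of $M$ into every $G/Q$; the right tool is likely a combination of (a) the classification of $G$-quasi-invariant ergodic measures on $M$ via the $H$-action and the Howe--Moore / Moore ergodicity theorem (mixing of one-parameter subgroups forces ergodicity of $\Gamma\backslash H$ under large subgroups), and (b) an argument that any such equivariant map, being valued in a compact $G$-space, is governed by the boundary theory of $G = SO^0(n,1)$, whose only boundary is $G/P$ with $P$ minimal parabolic — so the only candidate factor is the full boundary, and one shows directly that $M$ admits no equivariant map onto $\partial G$ because the normal bundle / transverse structure to the $G$-orbits has no room for it. Making this last transversality obstruction precise, and checking it is insensitive to the choice of quasi-invariant measure, is the delicate part; everything else is a matter of assembling standard homogeneous-dynamics and rigid-geometry facts.
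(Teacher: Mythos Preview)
Your construction has a structural problem that prevents it from reaching the goal. You propose $M=\Gamma\backslash H$ (or $\Gamma\backslash H/C$) with $G\subset H$ acting by left translation. But left translation by any subgroup of $H$ preserves the (right) Haar measure on $\Gamma\backslash H$, so $M$ carries a finite $G$-invariant smooth measure of full support. Any $\mu$-stationary measure absolutely continuous with respect to it, and in particular the one you build by pushing forward Haar, is then $G$-invariant; and if the stationary measure is unique it must equal this invariant one. Thus you cannot obtain a stationary, non-invariant $\nu$ from a homogeneous model of this type, regardless of whether the rigid structure you put on it is unimodular. (Your parallelism example is in fact always unimodular; and even for a non-unimodular connection on $\Gamma\backslash H$ the Haar volume is still $G$-invariant.) The compactness is also delicate: with $\Gamma$ a lattice in $G$ and $H\supsetneq G$ noncompact, $\Gamma\backslash H$ is not compact unless $H$ differs from $G$ only by compact factors, in which case you are back to the measure-preserving situation.

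The paper's construction is of an essentially different nature: it is an \emph{induced} action. One takes a cocompact lattice $\Gamma\subset G$ that surjects onto a cocompact lattice $\Delta\subset SO^0(1,m)$ with $m\ge 4$ (using that rank-one lattices virtually surject onto free groups), lets $\Gamma$ act on $S^{m-1}$ through $\Delta$ as conformal transformations, and sets $M=G\times_\Gamma S^{m-1}$. The fiber action has no $\Gamma$-invariant probability measure, so the unique stationary measure on $M$ (from \cite{nz1}) is not $G$-invariant; the rigid structure is the product of the Killing pseudo-metric on $G/\Gamma$ with the conformal structure on $S^{m-1}$, rigid for $m\ge4$. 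Local freeness is immediate since stabilizers are conjugates of subgroups of $\Gamma$. The absence of any measurable projective factor comes from the kernel of $\Gamma\to\Delta$: a $G$-map $M\to G/P$ would give a $\Gamma$-map $S^{m-1}\to G/P$, hence a map into the fixed-point set of that kernel; but the kernel is an infinite normal subgroup of $\Gamma$, hence Zariski-dense in $G$, and $G/P$ has no $G$-fixed points. None of this is available in a homogeneous model, which is why your step~(4) remained out of reach.
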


%


%

 We now turn to a generalization of  Gromov's results regarding rigid analytic structures from the measure-preserving case to the stationary case. We will thus be able to obtain significant information regarding the 
algebraic structure of the fundamental group of the manifold, as well as on the action of $G$ on $M$. This information is valid for simple groups of any real rank, including one.  We formulate below a generalization of Gromov's representation theorem \cite{g1}\cite[Thm. 1.5]{zproc}, formulated originally for unimodular structures (where the maximal measurable projective factor is necessarily trivial). In our present context the structure may not be unimodular, and the maximal projective factor gives rise to a certain parabolic subgroup $Q$ which replaces $G$ in the conclusion of the representation theorem, so that when the stationary measure is invariant the results coincide. 

Before stating this generalization we introduce the following notation. Let $Q$ be any proper parabolic subgroup 
of $G$, and write the Langlands decomposition as $Q=M_Q A_Q N_Q$. Denote by 
$Q_0=M_Q^\prime A_Q N_Q$ the co-compact subgroup of $Q$ obtained by requiring that $M_Q^\prime$ contains exactly  the simple factors of $M_Q$ which are non-compact. 
We recall that any stationary measure $\nu$ has a representation as a convolution $\nu=\tilde{\nu}_0\ast \lambda$, where $\lambda$ is a uniquely determined $P$-invariant probability 
measure, $P$  a minimal 
parabolic subgroup, and $\tilde{\nu}_0$ is a probability measure on $G$. Furthermore, we recall that 
$(M,\nu)$ has a maximal (measurable) projective factor $G/Q(X.\nu)$, where $Q(X,\nu)$ is a uniquely determined standard parabolic subgroup. In general, if we denote by $Q=Q_\lambda$ the stability group 
of the measure $\lambda$, we have $Q_\lambda\subset Q(X,\nu)$ and $Q_\lambda$ may be a proper subgroup. 
The extension $M\to G/Q(X,\nu)$ is called a measure-preserving extension if the measure $\lambda$ is indeed invariant under $Q(X,\nu)$  
(see \cite{nz1} and \cite{nz3} for a full discussion).  
We refer to \S 3 for further details on the notation and terminology used in the statement below.     

Our second main result is as follows : 

\begin{theorem}\label{Q}{\bf Generalization of Gromov's representation theorem.}
Let $(M,\omega,G,\nu)$ be a rigid analytic stationary system, and assume that it is a measure-preserving 
extension of its maximal projective factor. Consider the image $\Gamma$ of the Gromov representation of the fundamental group $\pi_1(M)$  in 
$GL(\mathfrak z)$, where $\mathfrak z$ is the Lie algebra of Killing fields on $\tilde{M}$ commuting with 
$\mathfrak g$.  Let  $\overline{\Gamma}$ denote the Zariski closure of $\Gamma$.  
Then precisely one of the following holds :
\begin{enumerate}
\item The $G$-action is generically locally free, and then  the Lie algebra of $\overline{\Gamma} $ 
has $\mathfrak q_0$ as a sub-quotient. Namely $\text{Lie}(\overline{\Gamma})$ contains a sub-algebra which has a factor algebra  containing an isomorphic copy of  the Lie algebra $\mathfrak q_0$ of the group $Q_0$ associated with 
$Q$. 
\item On a generic set $M_0$, the connected components of all stability groups are mutually conjugate and of positive dimension, the generic smooth maximal projective factor $G/Q_s$ is non-trivial, and $M_0$ is diffeomorphic to the induced action $G\times_{Q_s} Y$, where $Y\subset M_0$ is a $Q_s$-invariant submanifold.      
\end{enumerate}  
 
\end{theorem}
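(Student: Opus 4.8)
The plan is to derive the dichotomy from the generalized Gromov centralizer theorem (announced in the abstract) applied to the stationary system, combined with the structure of the maximal smooth projective factor $G/Q_s$ already available on a generic set. First I would set up the Gromov representation: the rigidity of $\omega$ together with the ergodic stationary measure $\nu$ of full support lets one produce, on the universal cover $\tilde M$, the Lie algebra $\mathfrak z$ of Killing fields commuting with $\mathfrak g$, the algebraic hull $\overline{\Gamma}$ of the $\pi_1(M)$-action on $\mathfrak z$, and the associated homogeneous vector bundle structure on a generic invariant set. The measure-preserving-extension hypothesis is exactly what is needed so that the analysis over the projective factor $G/Q(X,\nu)$ behaves, from the point of view of invariant-measure techniques (Borel density, cocycle superrigidity over the fibered base), as in Gromov's original unimodular setting, with $Q$ (and its co-compact piece $Q_0$) playing the role that $G$ plays there.

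Next I would split according to whether the $G$-action is generically locally free. In the locally free case, at a generic point the orbit directions span a copy of $\mathfrak g$ inside the tangent space, and the centralizer theorem produces inside the Lie algebra of the stabilizer of a generic point (in $\overline\Gamma$, acting on $\mathfrak z$) a copy of the relevant acting algebra; because the stationary — not invariant — measure forces the relevant "transverse" symmetry algebra to be that of $Q_0$ rather than of $G$, one extracts $\mathfrak q_0$ as a sub-quotient of $\text{Lie}(\overline\Gamma)$: a subalgebra (the image of the isotropy representation on the normal bundle to the orbit foliation) admitting a quotient that contains an isomorphic copy of $\mathfrak q_0$. This is essentially the argument of the representation theorem in \cite{g1}, \cite[Thm. 1.5]{zproc}, run over the base $G/Q(X,\nu)$ with $\lambda$ invariant, so one imports those computations rather than redoing them.

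In the non-locally-free case, the stabilizer of a generic point has positive-dimensional identity component. I would first show these identity components are mutually conjugate on a generic invariant set $M_0$: rigidity gives local homogeneity of the structure on a generic open set (the Gromov open–dense orbit/stratification result), so the stabilizer algebras vary in a single $G$-conjugacy class there, and ergodicity of $\nu$ under the $\mu$-random walk promotes this to a full generic set. Then the maximal smooth projective factor $G/Q_s$ is non-trivial precisely because a positive-dimensional isotropy prevents the "fully locally free" alternative of Theorem \ref{main}(1) being vacuous — one shows $Q_s$ is proper. Finally, picking $Y = $ the $Q_s$-preimage of a base point (a $Q_s$-invariant analytic submanifold by smoothness of the factor map on $M_0$), the evaluation map $G \times Y \to M_0$ descends to a $G$-equivariant diffeomorphism $G \times_{Q_s} Y \xrightarrow{\sim} M_0$, using that $M_0 \to G/Q_s$ is a smooth fiber bundle with $G$-action transitive on the base.

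The main obstacle I expect is the generalized centralizer theorem itself in the locally-free case — making precise, without an invariant measure on $M$, that the transverse symmetry algebra detected inside $\text{Lie}(\overline\Gamma)$ is exactly $\mathfrak q_0$ and appears as a sub-quotient (not a subalgebra), and verifying that the measure-preserving-extension hypothesis is strong enough to run Gromov's vanishing/algebraic-hull arguments fiberwise over $G/Q(X,\nu)$. The conjugacy-and-induced-structure statement in case (2) is comparatively soft once local homogeneity from rigidity and ergodicity are in hand; the delicate point there is only to rule out degenerate possibilities ($Q_s = G$, or the submanifold $Y$ failing to be embedded), which I would handle by the maximality characterization of $G/Q_s$ recalled before the theorem.
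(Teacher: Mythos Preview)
Your overall architecture matches the paper: split on generic local freeness, and in case (1) exploit the measure-preserving-extension hypothesis to trade the missing $G$-invariant measure for the $Q$-invariant $\lambda$. Case (2) is essentially the paper's argument; the paper obtains conjugacy of stabilizer identity components from the elementary semicontinuity-plus-ergodicity argument (Proposition \ref{3 alternatives}) rather than from Gromov's local homogeneity, but either works, and the induced-action description $M_0\cong G\times_{Q_s}Y$ with $Y=\phi^{-1}([Q_s])$ then follows formally from having a smooth equivariant map $\phi$ onto the homogeneous base.

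In case (1), however, your proposed mechanism is not what the paper does, and the discrepancy is where your acknowledged ``main obstacle'' lies. The paper does not work with stabilizers in $\overline\Gamma$, nor with the isotropy representation on the \emph{normal} bundle, and no cocycle superrigidity is invoked. Instead the argument runs through the sub-bundle $\Psi$ of the tangent bundle to the $G$-orbits whose fiber at $y$ is $\psi(y)=\{X\in\mathfrak g: X(y)\in\Ev_y(\mathfrak z)\}$. The (measurable) algebraic hull $L'_\nu$ of the $G$-action on $\Psi$ is a \emph{quotient} of the hull $L_\nu$ on the associated $GL(\mathfrak z)$-bundle, and $L_\nu\subset\overline\Gamma$; this is exactly the sub-quotient structure claimed. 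To see $\mathfrak q_0\subset\mathfrak l'_\nu$, one applies the centralizer theorem (Theorem \ref{Evl=Evh}) not to $G$ but directly to the measure-\emph{preserving} action of $Q_0$ on $(M,\lambda)$: together with the epimorphic lemma (Lemma \ref{epimorphic}) this yields $\psi(y)\supset\mathfrak q_0$ for $\lambda$-a.e.\ $y$, and Borel density for $Q_0$ then forces the $Q_0$-hull on $\Psi$ to contain $\text{Zcl}(\Ad_{\mathfrak g}(Q_0))$. The measure-preserving-extension hypothesis enters only at the last step, to say that the $G$-action on $(M,\nu)$ is induced from the $Q$-action on $(M,\lambda)$, so the $G$-hull on $\Psi$ contains the $Q_0$-hull. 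Your ``fiberwise over $G/Q(X,\nu)$'' picture is not how it goes: one works globally on $(M,\lambda)$ with the smaller group $Q_0$, and only afterwards passes to $G$ via induction.
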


%

Note that when the stationary measure is $G$-invariant, the first case  of Theorem \ref{Q} applies and we retrieve a version of Gromov's representation theorem. 

It may happen that the stability group $Q_\lambda$ reduces to the minimal parabolic subgroup $P$. 
But it is often the case that $Q_\lambda$ is as large as it can possibly be, namely $Q_\lambda=Q(X,\nu)$. This condition is equivalent to $(M,\nu)$ being a measure-preserving extension of its maximal (measurable) projective factor.  We recall that this holds whenever $G$ is of real rank at least two and the action of $G$ on $(M,\nu)$ is irreducible 
in the sense defined by \cite{nz3}. In particular, this holds when the $P$-action on $(M,\lambda)$ is mixing \cite{nz1}. 

We refer to \S 6 for some further results and corollaries 
of Theorem \ref{main} and Theorem \ref{Q}.

As to the methods of proof, let us note the following 
essential ingredients. First, we will show that it is possible to generalize Gromov's centralizer theorem beyond the context of simple groups preserving a unimodular structure. 
We will establish such a generalization below, relying on and generalizing some of the arguments presented in \cite{zproc}. Second, we note that in the presence of an invariant rigid structure of algebraic type, a number of equivariant maps to projective varieties present themselves naturally. Such a map immediately produces an equivariant projective factor, unless it is generically  constant. 
We will formulate this below as a general dichotomy, and apply it to the generic algebraic hull, to the Lie algebra of a generic stability  
group, and to several maps defined by evaluation of Killing vector fields. We will thus obtain certain useful reductions which will be utilized in the course of the proof of Theorems  \ref{main} and \ref{Q}. 

\vskip0.1truein
\noindent {\bf Acknowledgements.} The authors would like to thank David Fisher, Charles Frances 
and Karin Melnick for their very useful comments. 
\section{The generic smooth algebraic hull and maximal  projective factor}

\subsection{2.1 The generic smooth algebraic hull}

Let $M$ be a manifold, $G$ a Lie group acting smoothly on $M$, and
let $\eta$ be a $G$-quasi-invariant finite Borel measure on $M$ of full support.

A subset $M_0\subset M$ is called generic if it is open, dense and co-null.  
A map $f: M\to N$ is called generic if it defined on some generic 
subset $M_0\subset M$ and is smooth on $M_0$. If $M$ and $N$ are $G$-spaces 
a $G$-equivariant generic map is a generic map defined on some
 $G$-invariant generic subset satisfying $f(gm_0)=gf(m_0)$, for 
$g\in G$ and $m_0\in M_0$.

If $P \rightarrow M$ is a principal $H$-bundle, then by a
 {\em generic reduction} of $P$ to a subgroup $L \subseteq H$ we mean a 
smooth reduction defined on a generic set, i.e., 
a smooth $H$-equivariant map $P_1 \rightarrow H/L$, 
where $P_1$ is an $H$-invariant generic set.  
If $G$ acts on $P$ by principal bundle automorphisms, 
a $G$-invariant generic reduction is a $G$-invariant 
reduction  defined on a $G$-invariant generic set $P_1$.  

\begin{prop}
\label{algebraic hull}{\bf Generic smooth algebraic hull.}
Assume $G$ acts topologically transitively on $M$ (for example, ergodically on $(M,\eta)$, where $\eta$ has full support), 
and $P\to M$ is a principal 
$H$-bundle on which $G$ acts by bundle automorphisms.
Suppose $H$ is real algebraic.  
Then there is an algebraic subgroup $L \subseteq H$ such that
\begin{enumerate}
\item\label{Pn2.1(i)} There is a $G$-invariant generic reduction of $P$ to $L$.
\item\label{Pn2.1(ii)} If $L' \subsetneq L$ and $L'$ is algebraic, then there is 
no such reduction.
\item $L$ is unique up to conjugacy in $H$ \wrt properties
 \pref{Pn2.1(i)} and \pref{Pn2.1(ii)}.
\item If there is a $G$-invariant generic reduction of $P$ 
to $J$, then some conjugate of $J$ contains $L$.
\end{enumerate}
\end{prop}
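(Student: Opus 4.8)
The plan is to mimic the construction of the measurable algebraic hull (Zimmer's cocycle hull), but carried out in the topological/smooth category using topological transitivity in place of ergodicity. First I would set up a partial order on the collection of algebraic subgroups $L \subseteq H$ to which $P$ admits a $G$-invariant generic reduction, ordered by inclusion up to conjugacy. Since $H$ is real algebraic, its algebraic subgroups satisfy the descending chain condition on connected components together with a bound on the number of components (Noetherian-type argument: a strictly descending chain either drops dimension or, at fixed dimension, drops the number of connected components, and both are bounded). Hence any chain of reductions stabilizes, and a minimal element $L$ exists; this will give \pref{Pn2.1(ii)} essentially by definition once \pref{Pn2.1(i)} is in place. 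The existence part \pref{Pn2.1(i)} is immediate since $P \to P/H \cong M$ (or more precisely the tautological section) gives a trivial reduction to $H$ itself, so the family is nonempty.

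The substantive step is to show that the intersection of two reductions is again a reduction of the required form — this is what forces uniqueness \pref{Pn2.1(iii)} and the minimality/domination statement \pref{Pn2.1(iv)}. Suppose we have $G$-invariant generic reductions $\phi_i: P_i \to H/L_i$, $i=1,2$, defined on $G$-invariant generic sets $P_i$. On $P_1 \cap P_2$ (still $G$-invariant and generic, since the intersection of two open dense conull sets is open dense conull) the pair $(\phi_1,\phi_2)$ defines a smooth $H$-equivariant map into $H/L_1 \times H/L_2$. The $H$-orbits in this product are locally closed subvarieties, parametrized by double cosets $L_1 \backslash H / L_2$, with stabilizers the conjugates $L_1 \cap g L_2 g^{-1}$. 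The key point is that a generic section of $M$ composed with $(\phi_1,\phi_2)$ lands in a single $H$-orbit: here is where topological transitivity enters — the preimage of each $H$-orbit is a $G$-invariant subset of $M$, these preimages partition $M$ into locally closed (hence "constructible") pieces, and a topologically transitive action cannot have its space decomposed into more than one such piece with nonempty interior; one of the pieces must be open dense, and by conullity we may take it to be conull as well. Intersecting $P_1 \cap P_2$ with the preimage of this distinguished orbit gives a $G$-invariant generic set on which $(\phi_1,\phi_2)$ is a reduction to the stabilizer $L_1 \cap g L_2 g^{-1}$, which is algebraic and contained (up to conjugacy) in each of $L_1, L_2$.

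With this intersection property, the rest is formal: applying it to $L$ and any other reduction $J$ yields a reduction to $L \cap (\text{conjugate of } J)$, which by minimality of $L$ must be conjugate to $L$, forcing some conjugate of $J$ to contain $L$ — this gives \pref{Pn2.1(iv)}; applying it to two minimal subgroups $L$ and $L'$ yields a common reduction to a subgroup conjugate into both, whence $L$ and $L'$ are conjugate, giving \pref{Pn2.1(iii)}. I expect the main obstacle to be the constructibility argument in the smooth/analytic setting: one needs to know that the decomposition of $M$ according to which $H$-orbit the image lands in behaves well enough (locally closed, or at least with well-behaved closures) for topological transitivity to single out one piece, and that the resulting reduction is genuinely \emph{smooth} on an \emph{open dense conull} set rather than merely on a residual or merely measurable set. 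The resolution should combine the local closedness of orbits of algebraic group actions on varieties with the fact that the "bad" locus where the image jumps to a smaller orbit is a proper closed (hence measure-zero, nowhere dense) subset of the distinguished stratum, so it can be discarded while preserving genericity.
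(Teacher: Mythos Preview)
Your proposal is correct and is precisely the standard argument the paper has in mind: the paper does not spell out a proof of this proposition but simply remarks (Remark~2.3) that the construction and its proof are direct analogues of the measurable algebraic hull in \cite{zbook}, referring to \cite{topsr} and \cite{feres} for a full treatment in the generic $C^r$ category. The obstacle you flag---that the partition of $M$ by preimages of $H$-orbits in $H/L_1\times H/L_2$ may have uncountably many pieces, so a naive Baire argument does not immediately produce a piece with nonempty interior---is genuine, and your sketched resolution is the right one: Rosenlicht's theorem furnishes a Zariski-open $H$-invariant subset admitting a geometric (in particular Hausdorff) quotient, on whose open $G$-invariant preimage the induced $G$-invariant continuous map is constant by topological transitivity; if that preimage happens to be empty one passes to the closed complement and iterates by Noetherian induction on dimension, which yields the single-orbit conclusion on a genuinely open dense conull set and hence a smooth reduction to $L_1\cap gL_2g^{-1}$.
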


\begin{definition}
\label{hull-def}
The group $L$ is called the generic smooth (or $C^\infty$) 
algebraic hull of the $G$-action on $P$.
\end{definition}

\begin{remark}
\begin{enumerate}
\item 
Proposition \ref{algebraic hull}, its proof and Definition \ref{hull-def}
 are analogues of
 the measurable algebraic hull (see e.g. \cite{zbook}).  
The generic smooth algebraic hull was first introduced in \cite{topsr}.
\item Similar arguments establish the existence of the $C^r$ generic algebraic 
hull, of an action on a principal bundle, for any $r \ge 0$. A full discussion can be found in \cite{feres}.  
\end{enumerate}
\end{remark}

Let us illustrate this notion by the following example, which will appear repeatedly below. 
Let $G$ be a Lie group, and consider its action on 
 $M \times \mathfrak g$, the product vector bundle, where 
$G$ acts on $\mathfrak g$ via $\Ad$.
Consider the associated principal bundle $M\times GL(\mathfrak g)$. 
Then the generic smooth algebraic hull 
 $L \subseteq \GL(\mathfrak g)$ obviously 
satisfies $L \subseteq \text{Zcl}(\Ad(G))$, 
Zcl denoting the closure in the Zariski topology.

 Recall also that the image of a real algebraic group under a rational representation is of finite index in its Zariski closure. We will apply this fact for  $G$ semisimple or for the group $A_QN_Q$ (where $Q=M_QA_QN_Q $ is a parabolic subgroup), and the representations   $\Ad(G)$ and $\Ad_{\mathfrak g}(A_QN_Q)$.

\subsection{2.2 Generic smooth maximal projective factor}

Recall that a compactly supported 
probability measure $\mu$ on an lcsc group $G$ is called admissible if 
some convolution power is absolutely continuous with respect to Haar measure, and 
the support of $\mu$ generates $G$. If $G$ acts measurably on a standard Borel space 
$X$, a Borel probability measure $\nu$ is called $\mu$-stationary if $\mu\ast\nu=\nu$, namely 
for every $f\in L^1(X,\nu)$
$$\int_G\int_X f(gx)d\nu(x)d\mu(g)=\int_X f(x)d\nu(x)\,\,.$$ 
The pair $(X,\nu)$ is called a $(G,\mu)$-space. We can now state 

\begin{prop}\label{max factor}{\bf Generic smooth maximal projective factor}
Let $G$ be a connected non-compact semisimple Lie group with finite center, and $M$ a manifold on which $G$ acts smoothly.
Let $\mu$ be an admissible 
probability measure on $G$, and $\nu$ an ergodic $\mu$-stationary measure of full support. Then there exists 
a parabolic subgroup $Q\subset G$, with the following properties :
\begin{enumerate}
\item There exists a generic smooth $G$-equivariant factor map $\phi : M_0\to G/Q$, satisfying also $\phi_\ast(\nu)=\nu_0$, 
where $M_0\subset M$ is generic and $G$-invariant,  $\nu_0$  the unique $\mu$-stationary measure on $G/Q$.
\item $G/Q$ is the unique generic smooth maximal homogeneous projective factor of $M$, namely any other such factor
  of $M$ is also a factor of $G/Q$. In particular the parabolic subgroup $Q$ is uniquely determined up to conjugation. 
  \item The map $\phi$ is generically unique, namely any other $G$-equivariant generic smooth map $\psi : M_1\to G/Q$ 
  agrees with $\phi$ on a $G$-invariant generic set.   
\end{enumerate} 
\end{prop}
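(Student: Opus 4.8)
The plan is to argue directly in the generic smooth category. Let $\mathcal Q$ be the set of conjugacy classes $[Q']$ of parabolic subgroups $Q'\subseteq G$ such that $G/Q'$ occurs as a generic smooth $G$-equivariant factor of $M$; this is non-empty, since the constant map to the one-point space $G/G$ exhibits the improper parabolic. As $G$ has only finitely many conjugacy classes of parabolic subgroups, $\mathcal Q$ is finite, and we order it by $[Q_1]\le[Q_2]$ iff there is a surjective $G$-map $G/Q_1\to G/Q_2$, equivalently iff some conjugate of $Q_1$ lies in $Q_2$. This is antisymmetric, since two parabolics each conjugate into the other have equal dimension and hence, being connected, are conjugate. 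I will show that $\mathcal Q$ is downward directed; a finite poset in which every two elements have a lower bound has a least element $[Q]$, and fixing a generic smooth $G$-map $\phi\colon M_0\to G/Q$ realizing it, statements (1)--(3) will follow.

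\noindent\textbf{Common refinements, and (1)--(2).} Given generic smooth $G$-equivariant factor maps $\phi_i\colon M_i\to G/Q_i$ ($i=1,2$), form $(\phi_1,\phi_2)\colon M_1\cap M_2\to V:=G/Q_1\times G/Q_2$, a generic smooth $G$-equivariant map into the projective $G$-variety $V$, and push $\nu$ forward to $m:=(\phi_1,\phi_2)_\ast\nu$. Then $m$ is $\mu$-stationary, and it is ergodic because $\nu$ is and a $\mu$-harmonic $L^\infty$ function on a $G$-equivariant factor pulls back to one on the total space. By the structure theory of $\mu$-stationary measures on algebraic $G$-varieties (Furstenberg; see \cite{zbook} and \cite{nz3}), $m$ is carried by a single $G$-orbit $O\subseteq V$ of the form $O=G/Q_3$ with $Q_3$ parabolic; being compact, $O$ is closed in $V$, hence an embedded submanifold. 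Here the full-support hypothesis is used decisively: $(\phi_1,\phi_2)^{-1}(O)$ is closed in $M_1\cap M_2$ by continuity and $\nu$-conull since $\supp m=O$, and a closed $\nu$-conull subset of a space with $\supp\nu=M$ is everything; hence $(\phi_1,\phi_2)$ maps $M_1\cap M_2$ \emph{into} $O$. Corestricting to the embedded submanifold $O=G/Q_3$ yields a generic smooth $G$-equivariant map $\phi_3\colon M_1\cap M_2\to G/Q_3$, and the two projections $O\to G/Q_i$ recover $\phi_i$ on composition; thus $[Q_3]\in\mathcal Q$ with $[Q_3]\le[Q_1],[Q_2]$, so $\mathcal Q$ is downward directed. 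For the least element $[Q]$ and any realizing map $\phi\colon M_0\to G/Q$, statement (2) holds by construction, as $\mathcal Q$ contains the class of every homogeneous projective factor of $M$; since any generic smooth factor is in particular measurable, the measurable maximal projective factor $G/Q(X,\nu)$ refines $G/Q$, i.e.\ $Q(X,\nu)$ is conjugate into $Q$. For (1): $\phi_\ast\nu$ is an ergodic $\mu$-stationary measure on $G/Q$, and the flag variety $G/Q$---being an equivariant image of the Furstenberg--Poisson boundary $G/P$, $P$ a minimal parabolic---carries a unique $\mu$-stationary probability measure $\nu_0$, so $\phi_\ast\nu=\nu_0$.

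\noindent\textbf{Generic uniqueness (3).} If $\psi\colon M_1\to G/Q$ is another generic smooth $G$-equivariant map, apply the construction above to $(\phi,\psi)\colon M_0\cap M_1\to G/Q\times G/Q$: its image is a single closed orbit $O'=G/Q'$ with $Q'$ parabolic, and it is all of $O'$, the image being $G$-invariant inside one orbit; so $[Q']\in\mathcal Q$. The two projections $O'\to G/Q$ show $Q'$ is conjugate into $Q$, while minimality of $[Q]$ gives $Q$ conjugate into $Q'$; since parabolic subgroups are connected and self-normalizing, $Q$ and $Q'$ are conjugate and all these maps are isomorphisms. Hence $O'$ is the graph of a $G$-automorphism of $G/Q$; as $N_G(Q)=Q$, this automorphism group is trivial, so $O'$ is the diagonal and $\phi=\psi$ on the generic set $M_0\cap M_1$.

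\noindent\textbf{Main obstacle.} Granted the cited structure theory of $\mu$-stationary measures on $G$-varieties, the one genuinely new point is the full-support step that upgrades ``the image lies in the orbit $O$ for $\nu$-almost every point'' to ``the image lies in $O$ everywhere on a generic set'': this is precisely what converts the measure-theoretic conclusion into a smooth one and legitimises corestriction to $O$, without any separate regularization. I expect the remaining points---bookkeeping with the ``up to conjugacy'' ambiguities among parabolics, and the routine verification that corestricting a smooth $G$-map to a closed $G$-orbit is again smooth and equivariant---to be straightforward but to require some care.
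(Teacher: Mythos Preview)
Your argument is correct and is, in essence, the proof the paper has in mind: the paper's own ``proof'' is simply a citation of \cite[Lemma 0.1]{nz3} together with the remark that the measurable argument carries over unchanged to the generic smooth category, and what you have written is exactly that argument spelled out---forming the product map, invoking the structure of ergodic $\mu$-stationary measures on projective $G$-varieties to land in a single compact orbit $G/Q_3$, and then taking the minimum over the finite poset of parabolics. You have also correctly isolated the one point that is genuinely new in the smooth setting and that the paper elides: the use of $\supp\nu=M$ to promote ``$(\phi_1,\phi_2)\in O$ almost everywhere'' to ``everywhere on the generic set,'' which is precisely what lets one corestrict smoothly to the closed orbit.
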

\begin{proof}
The existence of a unique maximal homogeneous projective factor was proved for the measurable category 
in \cite[Lemma 0.1]{nz3}, and the same proof applies without change in the present set-up as well. 
\end{proof}
\begin{definition}
Under the assumptions of Proposition \ref{max factor}, $G/Q$ is called the generic smooth maximal projective factor of 
the $G$-action on $(M,\nu)$. 
\end{definition}

\subsection{2.3 The smooth algebraic hull along orbits}

In our analysis below, $G$ will act smoothly on a manifold $M$ with tangent bundle $TM$, and 
we will have cause to be interested in the sub-bundle $TO$ consisting of the tangents to the $G$-orbits, 
particularly when the $G$-action is locally free. An important role will be played by the generic 
smooth algebraic hull along the orbits, particularly when the action is generically locally free. 
Recall first that for a Lie group $G$ and the associated principal bundle $M\times GL(\mathfrak g)$,  
(a conjugate of) the generic smooth algebraic hull  $L$ must be contained in 
 $\text{Zcl}(\Ad(G)) \subseteq \GL(\mathfrak g)$. 

We can now state the following result, which  will play an important role below.

\begin{prop}\label{hull-Ad}{\bf Generic smooth algebraic hull along orbits}.

 Let $G$ be a connected semisimple Lie group with finite center and no compact factors, acting smoothly 
 on a manifold $M$. Let $\mu$ be an admissible probability measure on $G$. 
\begin{enumerate}
\item If $\nu$ is an ergodic $\mu$-stationary measure on $M$ of full support,  then  
 the generic smooth algebraic hull $L$ for the action of $G$ on the bundle $M\times GL(\mathfrak g)$ 
is of finite index in $ \text{Zcl}(\Ad(G))$ if and only if the generic smooth maximal projective factor of $(M,\nu)$ is trivial.
\item Let $Q\subset G$ be a proper parabolic subgroup with Langlands decomposition $Q=M_QA_QN_Q$.  The measurable algebraic hull $L_0$ of the action of the group  
$A_QN_Q\subset Q$ on $M\times GL(\mathfrak g)$  w.r.t. an ergodic invariant probability 
measure $\lambda$ on $M$ is of finite index in $\text{Zcl}(\Ad_{\mathfrak g}(A_QN_Q))$.
\end{enumerate}
\end{prop}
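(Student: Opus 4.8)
The plan is to prove both parts of Proposition \ref{hull-Ad} by reducing statements about the algebraic hull in $\GL(\mathfrak g)$ to statements about equivariant maps into the homogeneous projective variety $\GL(\mathfrak g)/L$, and then invoking the maximality / uniqueness properties of the projective factors already established (Proposition \ref{max factor} and Proposition \ref{algebraic hull}).

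\begin{proof}[Proof sketch]
\textbf{Part (1).} Recall from the example following Proposition \ref{algebraic hull} that (a conjugate of) $L$ is contained in $H:=\text{Zcl}(\Ad(G))\subseteq\GL(\mathfrak g)$. Since $\Ad(G)$ has finite index in $H$, we may as well replace $H$ by $\Ad(G)$ up to finite index; the point is that $L$ has finite index in $H$ precisely when the quotient variety $H/L$ is finite, equivalently zero-dimensional, equivalently (since $G$ is connected and acts on it) a single point fixed by $G$. By Proposition \ref{algebraic hull}\pref{Pn2.1(i)} we have a $G$-invariant generic reduction of $M\times\GL(\mathfrak g)$ to $L$, i.e.\ a $G$-equivariant generic smooth map $M_0\to H/L$ (after pushing the $\GL(\mathfrak g)$-equivariant map to the associated bundle and using that $L\subseteq H$). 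Now the associated bundle $M_0\times_G (H/L)$ — more precisely the map $M_0\to H\backslash\!\backslash\GL(\mathfrak g)\cdots$ — gives a $G$-equivariant generic smooth map from $M_0$ into the \emph{homogeneous} space $H/L$, which is a quasi-projective (indeed projective, $L$ being algebraic and $H$ reductive up to the relevant factors) variety on which $G$ acts. If $L$ has finite index in $H$, this target is a point, so the reduction carries no information; conversely if $L$ has positive codimension in $H$ then $H/L$ is a positive-dimensional $G$-variety and, by the general structure of algebraic actions of semisimple groups on projective varieties together with the existence of the stationary measure $\nu$ pushed forward, one extracts a non-trivial homogeneous projective factor $G/Q$ of $(M,\nu)$, contradicting triviality of the maximal projective factor. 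For the converse direction one argues that if the maximal projective factor $G/Q$ is non-trivial, the factor map $\phi:M_0\to G/Q$ together with any $G$-equivariant trivialization of the pullback bundle yields a reduction of $M\times\GL(\mathfrak g)$ to a proper algebraic subgroup of $H$ (the stabilizer in $H$ of an appropriate point coming from the $Q$-structure), so by Proposition \ref{algebraic hull}\pref{Pn2.1(ii)} the hull $L$ is a proper subgroup of infinite index. This establishes the ``if and only if''.

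\textbf{Part (2).} Here the group acting is $S:=A_QN_Q$, which is \emph{amenable} (solvable, in fact), and we work with respect to an $S$-invariant — hence stationary — ergodic probability measure $\lambda$. The target is $H':=\text{Zcl}(\Ad_{\mathfrak g}(S))\subseteq\GL(\mathfrak g)$, and again $\Ad_{\mathfrak g}(S)$ has finite index in $H'$ by the rational-image fact recalled in \S2.1. The claim is that the measurable algebraic hull $L_0$ of the $S$-action on $M\times\GL(\mathfrak g)$ has finite index in $H'$, equivalently that there is no $S$-invariant measurable reduction to a proper algebraic subgroup of infinite index. Since $\lambda$ is $S$-\emph{invariant}, one can apply the measure-theoretic analogue of the above: an $S$-invariant measurable reduction to $L_0\subsetneq H'$ gives an $S$-equivariant measurable map $M\to H'/L_0$; pushing $\lambda$ forward yields an $S$-invariant probability measure on the $H'$-variety $H'/L_0$. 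But $S=A_QN_Q$ acts on $\mathfrak g$ via $\Ad$, and the only $S$-invariant probability measures on $H'/L_0$ are supported on the set of $S$-fixed points (the crucial input is that $N_Q$ acts unipotently and $A_Q$ by a semisimple one-parameter—more precisely $\mathbb R^{\dim A_Q}$—action with all weights of one sign on the relevant graded pieces, so no invariant probability can sit on a positive-dimensional orbit; this is the standard ``Furstenberg / Borel density for solvable groups acting algebraically'' phenomenon, or one can use that $\Ad_{\mathfrak g}(S)$ is Zariski-dense of finite index in $H'$ so any $S$-invariant measure is $H'$-invariant, and $H'$ has no invariant probability on $H'/L_0$ unless $L_0$ is of finite index). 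Hence the reduction may be taken with $L_0$ of finite index in $H'$, as claimed.

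\textbf{Main obstacle.} The routine parts are the translations between reductions of principal bundles and equivariant maps into homogeneous varieties. The genuine content, and the step I expect to require the most care, is in Part (1) extracting a \emph{smooth} non-trivial projective factor $G/Q$ from a positive-dimensional $G$-invariant generic reduction: one must argue that the $G$-equivariant generic smooth map $M_0\to H/L$, where $H/L$ need not itself be of the form $G/Q$, can be post-composed or refined to land on a genuine homogeneous projective $G$-variety, using the stationarity of $\nu$ and the classification of stationary measures on $G$-varieties (this is exactly where the hypotheses on $\nu$ enter, via Proposition \ref{max factor}). The converse in Part (1) — producing a proper infinite-index reduction from a non-trivial projective factor — is comparatively soft, relying on Proposition \ref{algebraic hull}\pref{Pn2.1(ii)} and \pref{Pn2.1(iii)}. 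In Part (2), the one point to be careful about is the precise sense in which ``$S$-invariant probability measures on $H'/L_0$ must be supported on fixed points'': this should be deduced cleanly from $\Ad_{\mathfrak g}(S)$ being Zariski-dense of finite index in $H'$ and a Borel-density-type statement for the finite-index subgroup, avoiding any appeal to unimodularity of $S$.
\end{proof}
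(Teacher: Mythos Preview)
Your proposal is correct and follows essentially the same route as the paper: in Part (1) you translate the hull into an equivariant map $M_0\to H/L$ and invoke the structure of stationary measures on algebraic $G$-varieties (the paper cites \cite[Prop.~3.2]{nz3} for exactly the step you flag as the main obstacle), and in Part (2) you push $\lambda$ forward to $H'/L_0$ and use the Borel-density-type fact that an $A_QN_Q$-invariant probability on an algebraic variety is supported on fixed points, together with $\Ad_{\mathfrak g}(A_QN_Q)$ having finite index in its Zariski closure. One small caution: your parenthetical alternative in Part (2) (``any $S$-invariant measure is $H'$-invariant, and $H'$ has no invariant probability on $H'/L_0$ unless $L_0$ is of finite index'') is shakier than the primary argument---finite index does let you average to $H'$-invariance, but concluding \emph{finite index} (rather than merely finite covolume) of $L_0$ from an $H'$-invariant probability on $H'/L_0$ is not automatic for solvable $H'$; stick with the fixed-point argument, which is what the paper uses.
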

\begin{proof}
\begin{enumerate}\item If $L$ is of finite index in $\text{Zcl}(\Ad(G))$, then $M$ does not admit a generic smooth equivariant map 
to any variety of the form $G/H$ if $H$ is algebraic of positive codimension. In particular, there exists such a map to the variety $G/Q$ only if $Q=G$. Conversely, if $L$ has positive codimension in $\text{Zcl}(\Ad(G))$ then $M$ does admit a non-trivial projective factor. Indeed, it follows from Proposition 3.2 of \cite{nz3}  that any positive dimensional algebraic 
variety of the form $G/L$ which supports a $\mu$-stationary measure must factor over $G/Q$ for some 
$Q\subsetneq G$. 
\item  $L_0\subset \text{Zcl}(\Ad_{\mathfrak g}(A_QN_Q))$ and there exists an equivariant map 
$M\to \text{Zcl}(\Ad_{\mathfrak g}(A_QN_Q))/L_0$, so 
the algebraic variety $\text{Zcl}(\Ad_{\mathfrak g}(A_QN_Q))/L_0$ has an 
$A_QN_Q$-invariant probability measure, 
the image of $\lambda$. As is well-known,  $A_QN_Q$ does not have algebraic subgroups of finite covolume, and any $A_QN_Q$-invariant probability measure on any algebraic variety must be supported on fixed points of $A_QN_Q$ in its action on the variety. On the other hand $A_QN_Q$ has finitely many orbits on the variety, since it is of finite index in its Zariski closure. It follows that $L_0$ contains $\Ad_{\mathfrak g}(A_Q N_Q)$ and is of finite index.  
\end{enumerate}
\end{proof}

\subsection{2.4 Projective factors and local freeness}
Let $G$ be a connected lie group acting smoothly on a manifold $M$. For any $m\in M$, we can consider the stability group $st_G(m)$, and its Lie algebra $\mathfrak s\mathfrak t _G(m)$.
Consider the map $m\mapsto \mathfrak s\mathfrak t _G(m)$,
 from $M$ to $Gr(\mathfrak g)=\coprod_{k=0}^{\dim G}  Gr_k(\mathfrak g)$, the disjoint union of the Grassmann varieties of $k$-dimensional subspaces 
 of $\mathfrak g$. This map is obviously $G$-equivariant (where $G$ acts on $\mathfrak g$ via $\Ad$), and measurable (see e.g. \cite[\S 3]{nz3} or \cite{dani}). Given an ergodic $G$-quasi-invariant measure $\eta$ on $M$, the values of this map 
 will be almost surely concentrated in a single $G$-orbit  in $Gr(\mathfrak g)$, since the action of $G$ on 
 $Gr(\mathfrak g)$ which is a disjoint union of algebraic varieties is (measure-theoretically) tame. 
 
 Now note that if the measure has full support, we can deduce that on a $G$-invariant {\it generic} set $M_0$ the Lie algebras of stability groups of points in $M_0$ are all in the same $\Ad(G)$-orbit, and in particular have the same dimension, say $d_0$. 
 Indeed, the $G$-invariant sets $U_d=\{m\in M\,;\, \dim \mathfrak s \mathfrak t _G(m)\le d\}$ are open sets, since the dimension function is upper semi continuous (see \cite[\S 2]{stuck}). Now  
 $U_{d_0-1}$ is an open and hence must be empty, 
 since the measure assigns positive measure to every non-empty open set, but the dimension of the stability group is almost surely $d_0$. Thus  
 $U_{d_0}=\{m\,;\, \dim\mathfrak s \mathfrak t_G(m)=d_0\}$ is a conull, $G$-invariant open and 
 dense set. 
 
Denoting the latter set by $M_0$,  the map $m\mapsto \mathfrak s\mathfrak t _G(m)$ is 
smooth on  $M_0$ (see e.g. 
 the argument of \cite[Lemma 2.10]{stuck}). 
 Thus we obtain a generic smooth map $M_0\to G\cdot \mathfrak s$, where $\mathfrak s$ is the Lie algebra 
 of a closed subgroup $S$. We can write $G\cdot \mathfrak s=G/N_G(S^0)$, where $S^0$ is the connected component and $N_G(S^0)$ is its normalizer in $G$. Thus for a connected semisimple Lie group we can conclude that
 $(M,\eta)$ has a non-trivial homogeneous smooth projective factor whenever  
 $N_G(S^0)$ is contained in a non-trivial parabolic subgroup $Q$. 
We can thus conclude the following, which is a generic smooth version of \cite[Prop. 3.1]{nz3}.

 \begin{prop}\label{3 alternatives}{\bf Smooth projective factors and local freeness.}
 
  Let $G$ be a connected semisimple  Lie group with finite center and no compact factors, acting smoothly on a manifold $M$ with ergodic quasi-invariant measure of full support. Then exactly one of the following three alternatives occur :
 \begin{enumerate}
 \item On a $G$-invariant generic set $M_0\subset M$, the identity components of the stability groups $st_G(m)$ are all  
  conjugate (say to $S^0$) and have positive dimension.  The generic smooth maximal projective factor $G/Q$ is non-trivial, and  $N_G(S^0)\subset Q\subsetneq G$.
  \item On a $G$-invariant generic set the identity components of the stability groups are all equal to a fixed normal subgroup $G_1$ of $G$ of positive dimension, so that the action of $G$ on the generic set factors 
  to an action of the group $G/G_1$.   
  \item On a $G$-invariant generic set, the $G$-action is locally 
  free, namely the stability groups are 
  discrete. 
 \end{enumerate}
 \end{prop}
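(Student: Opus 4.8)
The plan is to extract the trichotomy directly from the equivariant Grassmannian map constructed in the discussion preceding the statement, via two successive case splits. Recall that the stabilizer map $m\mapsto \mathfrak s\mathfrak t_G(m)$ is $G$-equivariant and measurable into $\Gr(\mathfrak g)$, that tameness of the $\Ad(G)$-action on the disjoint union of Grassmannians confines its essential range to a single orbit $G\cdot\mathfrak s=G/N_G(S^0)$ — here $\mathfrak s$ is the Lie algebra of a closed subgroup $S$, and, being a stabilizer Lie algebra, a subalgebra of $\mathfrak g$ — and that upper semicontinuity of $m\mapsto\dim\mathfrak s\mathfrak t_G(m)$ together with full support of $\eta$ yields a $G$-invariant generic set $M_0$ on which this dimension equals a constant $d_0$ and on which the map is smooth. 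We thus have a $G$-equivariant generic smooth map $\Psi\colon M_0\to G/N_G(S^0)$, necessarily surjective, with every $st_G(m)^0$ ($m\in M_0$) conjugate to $S^0$. The first split is on whether $d_0=0$; the second, when $d_0>0$, on whether $S^0$ is normal in $G$.

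If $d_0=0$ the points of $M_0$ have discrete stabilizer: this is alternative (3), and neither (1) nor (2) can hold since both assert positive-dimensional stabilizers. Suppose now $d_0>0$. If $S^0$ is normal in $G$ then, $G$ being semisimple, $\mathfrak s$ is an ideal, hence $\Ad(G)$-invariant, so $\mathfrak s\mathfrak t_G(m)=\mathfrak s$ for every $m\in M_0$ and $S^0=:G_1$ is a positive-dimensional connected normal (hence semisimple) subgroup with $G_1\subseteq st_G(m)$ for all $m\in M_0$. Thus $G_1$ acts trivially on $M_0$ and the $G$-action on $M_0$ descends to $G/G_1$: this is alternative (2), while (1) fails because $N_G(S^0)=G$ is not a proper parabolic.

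The remaining case is $d_0>0$ with $S^0$ not normal, so $N_G(S^0)\subsetneq G$; I claim alternative (1) holds. Here $G\cdot\mathfrak s$ is the $\Ad(G)$-orbit of $\mathfrak s$ in $\Gr(\mathfrak g)$, a locally closed algebraic subvariety of positive dimension (positive because $\mathfrak s$ is not $\Ad(G)$-fixed), so $G/N_G(S^0)$ is a positive-dimensional algebraic homogeneous $G$-variety carrying the finite $G$-quasi-invariant measure $\Psi_*\eta$. Arguing as in the proof of Proposition \ref{hull-Ad}(1), the structure results for such homogeneous spaces (Propositions 3.1 and 3.2 of \cite{nz3}) then furnish a non-constant $G$-equivariant map of $G/N_G(S^0)$ onto $G/Q$ for some proper parabolic $Q$, which, after replacing $Q$ by a conjugate, we may take so that $N_G(S^0)\subseteq Q\subsetneq G$. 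Composing with $\Psi$ gives a non-trivial generic smooth $G$-equivariant map $M_0\to G/Q$, so by Proposition \ref{max factor} the generic smooth maximal projective factor is non-trivial: this is alternative (1), and (2) fails since $S^0$ is not normal. The three cases are by construction mutually exclusive and exhaustive — determined by the value of $d_0$, and, when $d_0>0$, by the normality of $S^0$ — so exactly one alternative occurs.

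I expect the main obstacle to be precisely this last step: knowing that a positive-dimensional algebraic homogeneous space $G/N_G(S^0)$ with $N_G(S^0)$ proper, carrying a finite $G$-quasi-invariant measure, must factor onto a non-trivial projective variety $G/Q$. This is where the algebraicity of $N_G(S^0)$ (via $\Ad(G)$ being a real algebraic group, as recalled after Proposition \ref{algebraic hull}) and the classification in \cite{nz3} of algebraic homogeneous spaces admitting such invariant-measure-class data are indispensable, and where one must check carefully that the hypotheses of that classification are genuinely met in the present, merely quasi-invariant, setting.
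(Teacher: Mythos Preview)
Your argument is essentially the paper's own: the discussion preceding the proposition sets up the equivariant stabilizer map $M_0\to G/N_G(S^0)$ exactly as you do, and the proposition is then asserted as ``a generic smooth version of \cite[Prop.~3.1]{nz3}'' with no further proof given. Your case split on $d_0$ and then on normality of $S^0$ is the standard way to extract the trichotomy and matches what \cite{nz3} does in the measurable category.

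Your flagged concern at the end is not merely a technicality; it points to a genuine imprecision in the hypothesis. The passage from the proper algebraic homogeneous space $G/N_G(S^0)$ to a projective factor $G/Q$ with $N_G(S^0)\subset Q\subsetneq G$ cannot rest on quasi-invariance alone: every $G/H$ carries a $G$-quasi-invariant probability measure, so $\Psi_*\eta$ by itself carries no information. What is actually needed is that $\Psi_*\eta$ be $\mu$-\emph{stationary}, whence \cite[Prop.~3.2]{nz3} (exactly as invoked in the proof of Proposition~\ref{hull-Ad}(1)) applies and produces the parabolic $Q$. Note also that the ``generic smooth maximal projective factor'' appearing in alternative (1) is only defined, in Proposition~\ref{max factor}, under a stationarity hypothesis. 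With mere quasi-invariance the conclusion can fail outright: take $G=SL_2(\mathbb R)\times SL_2(\mathbb R)$ acting transitively on $M=G/\Delta\cong SL_2(\mathbb R)$ (with $\Delta$ the diagonal copy) equipped with any probability measure equivalent to Haar; then $d_0=3>0$, $S^0=\Delta$ is not normal, yet $N_G(\Delta)=\Delta\cdot Z(G)$ lies in no proper parabolic (its projection to each factor is surjective), so none of the three alternatives holds as written. Since the paper only invokes the proposition for stationary systems, you should read the hypothesis as ``ergodic $\mu$-stationary measure of full support''; under that reading your proof is complete and correct.
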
  
  
\begin{remark}\label{smooth}  We note that we will have below several other (a priori, measurable) $G$-equivariant maps which assign to points in $M$ linear subspaces of a finite dimensional space, and that the argument preceding Proposition \ref{3 alternatives} is completely general, and shows the following. Given such a map $\phi$, suppose that the dimension function $\dim \phi(m)$ is upper semi continuous, and the function $\phi$ is smooth on open sets where $\dim \phi(m)$ is constant.  If the linear spaces $\phi(m)$ almost surely lie in one $G$-orbit (where the measure on $M$ has full support), then there is a {\it generic} set where $\dim \phi(m)$ is constant and $\phi$ is smooth. This will be applied to the map 
$\psi$ below and others, and the verifications of the required properties is routine. 
 \end{remark}
\section{Rigid geometric structures :  orbits of the centralizer}

Let $G$ be a connected Lie group acting
 on a connected manifold $M$, preserving 
a rigid analytic structure of algebraic type \cite{g1}, which we denote by $\omega$.
Of course, we do not assume $\omega$ is unimodular. 
 We let $\eta$ denote a $G$-quasi-invariant finite Borel measure 
on $M$, usually assumed to be of full support.

We will follow the notation and terminology 
of \cite{zproc}, to which we refer for further discussion and the results cited below. 
Let $\tilde{\omega}$ be the structure on $\tilde{M}$ 
lifted from $\omega$, and $\mathfrak k$ the space of global vector fields on 
$\tilde{M}$ preserving $\tilde{\omega}$, namely global Killing fields. Let $\mathfrak n$ be the 
normalizer of $\mathfrak g$ in $\mathfrak k$, 
 where we have $\mathfrak g \subseteq \mathfrak k$ by lifting the action 
of $G$ to its universal covering group $\tilde{G}$. Let 
$\mathfrak z$ be the centralizer of $\mathfrak g$ in $\mathfrak k$.  
Thus $\mathfrak z \subseteq \mathfrak n \subseteq \mathfrak k$.  
Since $\mathfrak g$ is semi-simple, 
$\mathfrak n = \mathfrak g \oplus \mathfrak z$.

$\pi_1(M)$ acts on $\mathfrak k$, and $\mathfrak n$ and 
$\mathfrak z$ are $\pi_1(M)$-submodules.  
Elements of $\mathfrak k$ are lifted from a vector field on $M$ 
if and only if they are $\pi_1(M)$-invariant.  
$\tilde{G}$ also acts on $\mathfrak k$, the derived $\mathfrak g$-module 
structure being simply $\ad_{\mathfrak k}\bigr|_{\mathfrak g}$.  
By definition, $\tilde{G}$ commutes with $\mathfrak z$.  
The $\pi_1(M)$- and $\tilde{G}$-actions commute on $\tilde{M}$ and, 
in particular, on $\mathfrak k$ and its submodules.
 If $G$ acts analytically on the analytic manifold $M$, 
rigidity of the structure $\omega$ implies that the space of analytic 
Killing vector fields $\mathfrak k$ 
is finite dimensional and in fact $\text{Aut}(M,\omega)$ is a Lie group acting analytically on $M$,  
see \cite{g1}, \cite[Thm. 2.3]{zproc}.

For each $y \in \tilde{M}$, let 
$\Ev_y : \mathfrak k \rightarrow T\tilde{M}_y$ be the evaluation map.  
Write $$\psi(y) = \{X \in \mathfrak g : X(y) \in \Ev_y(\mathfrak z)\}
 \subseteq \mathfrak g.$$  Then $\psi$ is $\pi_1(M)$-invariant and 
$G$-equivariant (where $G$ acts on $\mathfrak g$ by $\Ad$), 
and hence factors through a generic $G$-map 
$M \rightarrow \Gr(\mathfrak g)$.
If the structure $\omega$ is unimodular and $M$ compact, so that there exists a 
$G$-invariant finite volume form, then Gromov's centralizer theorem 
is the assertion that, on a generic set, $\psi(y) = \mathfrak g$, provided $G$ is simple.  
This is a key property in the analysis of \cite{g1} and \cite{zproc}.

We will  generalize this in three  respects, allowing non-unimodular structures, stationary measures rather than invariant ones, and groups more general than simple ones when a 
probability measure is assumed to be preserved. We begin with 
formulating the following generalization of Gromov's centralizer theorem
\cite{g1}\cite{zproc}(Cor. 4.3). Consider the rigid structure of algebraic type 
$\omega'$ on $M$ consisting of the pair 
$(\omega, \mathfrak g)$.  A local automorphism of 
this structure is a local diffeomorphism which preserves 
$\omega$ and normalizes $\mathfrak g$.  We define global 
and infinitesimal (to order $k$) such automorphisms 
similarly.  Let $M_0 \subseteq M$ be the set of points for which 
infinitesimal automorphisms of $\omega'$ of sufficiently high fixed 
order at $m \in M_0$ extend to local automorphisms of $\omega'$.  
[cf. the discussion in \cite{g1} and \cite{zproc}(Theorem 2.1).]  
Then $M_0$ contains a generic set, and in addition is clearly 
$G$-invariant, so we can in fact 
assume $M_0$ is generic and $G$-invariant.

\begin{theorem}\label{Evl=Evh}{\bf Infinitesimal orbits of the centralizer for general groups.}
 Let $(M,\omega)$ be a connected analytic  compact  manifold  with a rigid analytic structure of algebraic type. 
Let $H\subset Aut(M,\omega)$ be a connected real algebraic group 
with finite center,  and no proper real algebraic subgroups of 
finite co-volume.  Assume $H$ preserves 
an ergodic probability measure $\lambda$ and is essentially locally free.
 Let $\mathfrak z$ be the centralizer of $\mathfrak h$ in the normalizer subalgebra 
 $\mathfrak n$.  Then, for $\lambda$-\ale $y \in M$, 
$\Ev_y(\mathfrak z) \supseteq \Ev_y(\mathfrak h)$.

\end{theorem}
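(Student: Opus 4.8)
The plan is to reduce the statement to a property of the algebraic hull of the $H$-action on a suitable principal bundle, and then exploit the fact that $H$ has no proper algebraic subgroup of finite covolume. First I would recall the standard jet-bundle machinery from \cite{zproc}: over the generic set $M_0$ where high-order infinitesimal automorphisms of $\omega'=(\omega,\mathfrak h)$ extend to local automorphisms, one has a principal $H'$-bundle $P\to M_0$ (a reduction of a frame bundle of sufficiently high order) whose structure group $H'$ is real algebraic, on which $H$ acts by bundle automorphisms, and such that the $H$-orbit foliation is encoded algebraically. The key point furnished by rigidity is that the pointwise stabilizer of the structure, together with the centralizer data, is an algebraic condition fiberwise, so that the assignment $y\mapsto \Ev_y(\mathfrak z)$ and $y\mapsto \Ev_y(\mathfrak h)$ lift to $H'$-equivariant algebraic maps on the fibers of $P$.

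Second, I would apply the measurable algebraic hull construction (the measurable analogue of Proposition \ref{algebraic hull}) to the $H$-action on $P$ with respect to $\lambda$, obtaining an algebraic subgroup $L\subseteq H'$ and an $H$-invariant measurable reduction $P_1\to H'/L$. Restricting attention to a $\lambda$-conull $H$-invariant set, one may assume this reduction is defined everywhere; essential local freeness of the $H$-action then lets one identify, along each orbit, the relevant tangent data with the $\mathfrak h$-module structure. The crucial algebraic input is the following: because $H$ preserves the probability measure $\lambda$, and the constraint that $\Ev_y(\mathfrak z)\not\supseteq\Ev_y(\mathfrak h)$ would force the hull $L$ to stabilize a proper subvariety of a partial flag, one can push $\lambda$ forward to an $H$-invariant probability measure on an algebraic variety of the form $H'/L$. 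By the argument already used in the proof of Proposition \ref{hull-Ad}(2) — an $H$-invariant probability measure on an algebraic $H$-variety is supported on $H$-fixed points when $H$ has no proper algebraic subgroup of finite covolume, combined with tameness — this measure must be supported on fixed points, which in turn forces $L$ to contain the image of $\mathfrak h$. Unwinding this back through $\Ev_y$ gives $\Ev_y(\mathfrak h)\subseteq\Ev_y(\mathfrak z)$ for $\lambda$-a.e.\ $y$.

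The translation between "the algebraic hull contains $\Ad(H)$" and "$\Ev_y(\mathfrak z)\supseteq\Ev_y(\mathfrak h)$" is the place where I expect the real work to be: one must show that the Killing fields commuting with $\mathfrak h$ are exactly those produced by the $L$-orbit structure, i.e.\ that a sufficiently large algebraic hull along the orbit forces the existence of enough centralizing Killing fields to fill up $\Ev_y(\mathfrak h)$. This is precisely the content of Gromov's original argument in the unimodular case \cite{zproc}(Cor.\ 4.3), and the point of the present theorem is that the hypothesis "$H$ unimodular semisimple with an invariant volume, hence an invariant probability" is replaced by "$H$ has no proper algebraic subgroup of finite covolume and preserves a probability measure $\lambda$" — but these two hypotheses play exactly the same role in the Borel-density-type step, so Gromov's argument should go through essentially verbatim once the hull has been shown to be large. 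Concretely: I would follow the steps of \cite{zproc} that produce, from the orbit of the algebraic hull, local Killing fields centralizing $\mathfrak h$ defined near $\lambda$-generic $y$, show these assemble into elements of $\mathfrak z$ (using that they extend globally to $\tilde M$ by rigidity and analyticity, and normalize $\mathfrak g$ hence — being central — lie in $\mathfrak z\subset\mathfrak n$), and finally check their evaluations at $y$ span $\Ev_y(\mathfrak h)$.

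\textbf{Main obstacle.} The principal difficulty is not the Borel-density step — which is handled as in Proposition \ref{hull-Ad} — but establishing that the measurable algebraic hull of the $H$-action on the jet bundle $P$ is large, i.e.\ contains a copy of $\Ad(\mathfrak h)$, \emph{without} an invariant volume form on $M$. In the unimodular case one uses the invariant volume directly; here one must instead leverage only the $H$-invariant probability measure $\lambda$ and essential local freeness, and argue that the hull cannot be degenerate because a small hull would again yield an $H$-invariant probability measure on a positive-dimensional algebraic $H'$-variety $H'/L$ with $L$ not containing $\Ad(\mathfrak h)$, contradicting the no-finite-covolume-subgroup hypothesis. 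Making this dichotomy precise — and in particular checking that the relevant variety is genuinely positive-dimensional unless $\Ev_y(\mathfrak z)\supseteq\Ev_y(\mathfrak h)$ already holds — is the crux of the argument.
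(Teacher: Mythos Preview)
Your proposal is correct and follows the same approach as the paper, which simply observes that Gromov's original argument from \cite{g1}, \cite{zproc} uses only (i) that the adjoint representation has finite kernel, (ii) the Borel density property (no proper real algebraic subgroups of finite covolume), and (iii) essential local freeness of the action --- all of which are assumed here --- so the proof carries over without change. Your ``main obstacle'' is in fact not one: in the unimodular case the invariant volume form is used only to produce an $H$-invariant probability measure for the Borel density step, and here $\lambda$ is such a measure by hypothesis, so no new idea is required at that point.
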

\begin{proof}
Gromov's argument 
to prove the centralizer theorem in the case of 
invariant measure does not depend on semi-simplicity of $G$. 
 Rather, it uses two crucial properties of the group, namely that the adjoint representation has finite kernel  and that the Borel 
density theorem hold (i.e., that the group should have no algebraic subgroups of co-finite volume.) 
 In addition, the argument requires essential local freeness of the action, which holds for any 
 {\it measure-preserving} action of a simple group.  
 Our assumptions about the group $H$ and its action on $M$ allow the same argument to proceed without change. We refer to  \cite[Thm 4.2, Cor. 4.3]{zproc} or \cite{g1} for details. 
 \end{proof}

Theorem \ref{Evl=Evh} will be applied to the action of a parabolic subgroup $Q$ of a simple group $G$, preserving a probability measure on the manifold. In this case, we obtain the following simple relation 
between global Killing fields centralizing the $Q$-action and those centralizing the $G$-action. 

\begin{lemma}\label{epimorphic}
\label{l=z}
Let $Q=M_Q A_Q N_Q$ be a proper parabolic subgroup of a connected semisimple Lie group $G$ 
with finite center and no compact factors. Let $G$ act on a 
compact manifold preserving a rigid analytic structure of algebraic type. 
If a Killing field on $\tilde{M}$ normalizes $\mathfrak g$ and 
centralizes ${\mathfrak a}_Q\oplus {\mathfrak n}_Q$, 
then it also centralizes $\mathfrak g$, 
namely $\mathfrak l = \mathfrak z $.
\end{lemma}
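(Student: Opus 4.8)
The plan is to deduce Lemma~\ref{l=z} from the generalized centralizer theorem (Theorem~\ref{Evl=Evh}) applied to the group $H = Q_0 = M_Q' A_Q N_Q$, or rather to the ``non-unimodular part'' $A_QN_Q$ of $Q$, and then to propagate the infinitesimal conclusion to a global one using analyticity. First I would observe that $\mathfrak l$, the algebra of Killing fields on $\tilde M$ normalizing $\mathfrak g$ and centralizing $\mathfrak a_Q \oplus \mathfrak n_Q$, sits inside $\mathfrak n = \mathfrak g \oplus \mathfrak z$; writing an element of $\mathfrak l$ as $X + Z$ with $X \in \mathfrak g$, $Z \in \mathfrak z$, the condition that $X+Z$ centralize $\mathfrak a_Q \oplus \mathfrak n_Q$ becomes (since $Z$ already commutes with all of $\mathfrak g$) the condition $[X, \mathfrak a_Q \oplus \mathfrak n_Q] = 0$ inside $\mathfrak g$. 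But in a semisimple Lie algebra the centralizer of $\mathfrak a_Q \oplus \mathfrak n_Q$ is contained in $\mathfrak a_Q \oplus \mathfrak n_Q$ itself (indeed the centralizer of $\mathfrak a_Q$ is $\mathfrak m_Q \oplus \mathfrak a_Q$, and then imposing commutation with the nilradical $\mathfrak n_Q$ forces $X \in \mathfrak z(\mathfrak m_Q) \oplus \mathfrak a_Q$ lying in the radical of $\mathfrak q$), so $X$ lies in a solvable — in particular $\Ad$-unipotent-plus-toral — part of $\mathfrak g$. The real content is to show this forces $X = 0$, i.e. $\mathfrak l \subseteq \mathfrak z$; the reverse inclusion $\mathfrak z \subseteq \mathfrak l$ is trivial since $\mathfrak z$ centralizes all of $\mathfrak g \supseteq \mathfrak a_Q \oplus \mathfrak n_Q$.

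To kill the $\mathfrak g$-component $X$, I would invoke Theorem~\ref{Evl=Evh} with $H = Q_0$: this group is real algebraic (up to finite index/cover issues one passes to $\Ad_{\mathfrak g}(A_QN_Q)$ or an appropriate algebraic group), has finite center, and — crucially — has no proper algebraic subgroup of finite covolume, exactly as used in the proof of Proposition~\ref{hull-Ad}(2). By hypothesis $G$ preserves the rigid structure and there is an ergodic $Q_0$-invariant (indeed $A_QN_Q$-invariant) probability measure $\lambda$ on $M$ — this is the $P$-invariant measure component from the convolution decomposition $\nu = \tilde\nu_0 \ast \lambda$, pushed up appropriately — and the $Q_0$-action is essentially locally free (here one uses that a parabolic has no positive-dimensional normal subgroup acting trivially, combined with the argument of Proposition~\ref{3 alternatives}; if local freeness fails one is in case (1)/(2) there and handles it separately). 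Theorem~\ref{Evl=Evh} then gives, for $\lambda$-a.e.\ $y$, that $\Ev_y(\mathfrak z_{\mathfrak h}) \supseteq \Ev_y(\mathfrak h)$ where $\mathfrak z_{\mathfrak h}$ is the centralizer of $\mathfrak h = \mathfrak a_Q \oplus \mathfrak n_Q$ (mod compact factors) in $\mathfrak n$. Thus every vector field in $\mathfrak a_Q \oplus \mathfrak n_Q$ agrees pointwise a.e.\ with some Killing field that centralizes $\mathfrak a_Q \oplus \mathfrak n_Q$ and normalizes $\mathfrak g$ — i.e. an element of $\mathfrak l$. Combined with the algebraic description above, for $\lambda$-a.e.\ $y$ one gets $\Ev_y(\mathfrak a_Q \oplus \mathfrak n_Q) \subseteq \Ev_y(\mathfrak l) \subseteq \Ev_y(\mathfrak z) + \Ev_y(\mathfrak a_Q\oplus\mathfrak n_Q)$, and the self-improving nature of the argument (as in Gromov's original: $\psi(y) = \mathfrak g$ on a generic set) should be pushed to show the $\mathfrak g$-component of $\mathfrak l$ vanishes identically.

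The step I expect to be the main obstacle is passing from the \emph{infinitesimal, measure-a.e.} statement supplied by Theorem~\ref{Evl=Evh} to the \emph{global, algebraic} statement $\mathfrak l = \mathfrak z$ as an identity of Lie algebras of globally defined Killing fields on $\tilde M$. The bridge is analyticity: $\mathfrak l$ and $\mathfrak z$ are fixed finite-dimensional spaces of analytic vector fields, so an inclusion of their evaluations holding on a set of positive measure (hence on a set with an accumulation point, by full support of $\lambda$ on the relevant invariant set) propagates, via unique continuation for analytic Killing fields and the connectedness of $\tilde M$, to a global inclusion of the vector fields themselves. One must check that the a.e.\ statement can be upgraded to hold on a generic (open dense conull) set — this uses the upper-semicontinuity/smoothness machinery of Remark~\ref{smooth} applied to $y \mapsto \Ev_y(\mathfrak z) \cap \Ev_y(\mathfrak a_Q \oplus \mathfrak n_Q)$ — and then that a subspace inclusion of analytic sections valid on an open set is valid everywhere. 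I would also need to double-check the essential-local-freeness hypothesis for $Q_0$ and the precise identification of the $Q_0$-invariant ergodic measure; granting those, the conclusion $\mathfrak l = \mathfrak z$ follows.
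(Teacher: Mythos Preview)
Your approach massively overshoots the target. The lemma is a one-line representation-theoretic fact and has nothing to do with Theorem~\ref{Evl=Evh}, measures, local freeness, or analytic continuation. The paper's proof is simply: the space $\mathfrak k$ of global Killing fields on $\tilde M$ is a finite-dimensional $\mathfrak g$-module (via $\ad_{\mathfrak k}|_{\mathfrak g}$, using rigidity of the structure); a Killing field centralizes $\mathfrak a_Q\oplus\mathfrak n_Q$ iff it is an $(\mathfrak a_Q\oplus\mathfrak n_Q)$-invariant vector in this module; and $A_QN_Q$ is epimorphic in $G$, meaning it has exactly the same invariant vectors as $G$ in every finite-dimensional representation. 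Hence the field is $\mathfrak g$-invariant, i.e.\ lies in $\mathfrak z$. That is the entire argument.

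Your first paragraph is actually one step away from this. Once you have reduced to $X\in\mathfrak g$ with $[X,\mathfrak a_Q\oplus\mathfrak n_Q]=0$, apply the epimorphic property to the adjoint representation of $G$ on $\mathfrak g$: the $(\mathfrak a_Q\oplus\mathfrak n_Q)$-invariants in $\mathfrak g$ coincide with the $\mathfrak g$-invariants, so $[X,\mathfrak g]=0$, hence $X=0$ since $\mathfrak g$ is semisimple. You correctly identify that ``$X=0$'' is what must be shown, but then abandon the algebraic line for a geometric detour instead of finishing in one sentence.

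That detour has a genuine gap beyond being unnecessary. First, it imports hypotheses (an ergodic $Q_0$-invariant probability measure, essential local freeness of $Q_0$) that are not in the statement of the lemma; the lemma is purely a statement about the Lie algebra of Killing fields, independent of any measure on $M$. Second, and more seriously, Theorem~\ref{Evl=Evh} points in the wrong direction for your purpose: it gives a \emph{lower} bound $\Ev_y(\mathfrak l)\supseteq\Ev_y(\mathfrak h)$ on the evaluation of the centralizer, whereas to prove $\mathfrak l\subseteq\mathfrak z$ you need to kill the $\mathfrak g$-component of an arbitrary element of $\mathfrak l$, which is an \emph{upper} bound question. Knowing that $\Ev_y(\mathfrak l)$ is large on a generic set does not constrain a given element $X+Z\in\mathfrak l$ to have $X=0$; the analytic continuation you sketch would at best propagate equalities of evaluated tangent subspaces, not identities among the Killing fields themselves as elements of $\mathfrak k$. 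In the paper the logical flow is the reverse of what you propose: Lemma~\ref{l=z} is the easy algebraic input that lets one upgrade the $Q_0$-centralizer output of Theorem~\ref{Evl=Evh} to a statement about the $\mathfrak g$-centralizer $\mathfrak z$ (see the proof of Theorem~\ref{2 alternatives}), not the other way around.
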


\begin{proof}
The space $\mathfrak k$ of global Killing fields is a finite-dimensional $\mathfrak g$-module, 
so any vector invariant under $ {\mathfrak a}_Q\oplus {\mathfrak n}_Q$ is 
also invariant under $\mathfrak g$. This follows from the fact that for a proper parabolic subgroup $Q$ 
the subgroup $A_Q N_Q$ is epimorphic in $G$, namely has the same set of invariant vectors as $G$ does, in any finite dimensional representation.  
\end{proof}

Consider now a semisimple Lie group $G \subset \text{Aut} (M,\omega)$, where now the underlying measure $\nu$ on $M$ is assumed to be stationary, for some admissible measure $\mu$ on $G$, namely consider rigid analytic stationary systems.  
Note that the $G$-equivariant map $\psi : M\to Gr(\mathfrak g)$ defined above (via evaluation) always exists, and allows us to modify Gromov's techniques from \cite{g1} to prove the following.

\begin{theorem}\label{2 alternatives}{\bf Projective factors and the evaluation map.}
\label{psi-neq-0}

 Let $(M,\omega, G,\nu)$ be a rigid analytic stationary system.  Then at least one of the following 
possibilities holds : 
\begin{enumerate}
\item for $y$ in a $G$-invariant generic set, $\psi(y)=\mathfrak g$. 
\item  $M$ has a generic smooth maximal projective factor which is non-trivial. 
\end{enumerate} 
\end{theorem}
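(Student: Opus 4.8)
The plan is to run Gromov's dichotomy argument for the evaluation map $\psi$, replacing the invariant-measure input with the stationary-measure machinery and the generic smooth algebraic hull. First I would invoke Remark \ref{smooth}: the map $\psi : y \mapsto \{X \in \mathfrak g : X(y) \in \Ev_y(\mathfrak z)\}$ is $\pi_1(M)$-invariant and $G$-equivariant, hence descends to a $G$-equivariant measurable map $M \to \Gr(\mathfrak g)$; one checks (routinely, as in \cite{nz3,stuck}) that $\dim \psi(m)$ is upper semicontinuous and that $\psi$ is smooth on the open sets where the dimension is locally constant. Since $\nu$ is ergodic and the $G$-action on $\Gr(\mathfrak g)$ is tame, the values $\psi(m)$ lie almost surely in a single $\Ad(G)$-orbit; because $\nu$ has full support, Remark \ref{smooth} then yields a $G$-invariant generic set $M_0$ on which $\psi$ is smooth and takes values in one $\Ad(G)$-orbit $\Ad(G)\cdot \mathfrak s_0 \cong G/N_G(S_0^0)$ in $\Gr(\mathfrak g)$. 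Thus $\psi$ restricts to a generic smooth $G$-equivariant map $M_0 \to G/N_G(S_0^0)$.

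Next I would analyze the subspace $\mathfrak s_0 = \psi(y)$. Because $\mathfrak z$ is $\mathfrak g$-invariant (it is centralized by $\mathfrak g$) and the evaluation maps are equivariant, $\psi(y)$ is an $\ad$-closed — indeed, a $\mathfrak g$-submodule-like — condition; more precisely, since $G$ acts on $\Gr(\mathfrak g)$ by $\Ad$ and $\psi$ is equivariant, the stabilizer of the point $\mathfrak s_0$ is the algebraic subgroup $N_G(S_0)$ of $G$ preserving that subspace, and $\psi$ being $G$-equivariant onto a single orbit means we have a genuine factor map onto $G/N_G(S_0)$. Now either $\mathfrak s_0 = \mathfrak g$, in which case alternative (1) holds on the generic set $M_0$; or $\mathfrak s_0 \subsetneq \mathfrak g$ is a proper subspace, hence $N_G(S_0)$ is a proper algebraic subgroup of $G$ — it is proper because a proper subspace cannot be $\Ad(G)$-invariant for $G$ semisimple acting nontrivially (here $\mathfrak g$ has no proper ideals beyond the simple-factor decomposition, and $\psi(y) \ne \mathfrak g$ forces the normalizer to be proper; for simple $G$ this is immediate).

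In the second case I would conclude using the stationary-measure theory already cited. The generic smooth map $M_0 \to G/N_G(S_0)$ pushes $\nu$ forward to a $\mu$-stationary measure on the positive-dimensional algebraic homogeneous space $G/N_G(S_0)$. By Proposition 3.2 of \cite{nz3} (exactly as invoked in the proof of Proposition \ref{hull-Ad}(1)), any positive-dimensional algebraic variety of the form $G/L$ carrying a $\mu$-stationary measure factors $G$-equivariantly over some $G/Q$ with $Q \subsetneq G$ parabolic. Composing, $M_0$ admits a generic smooth $G$-equivariant map onto a non-trivial $G/Q$, so by Proposition \ref{max factor} the generic smooth maximal projective factor of $(M,\nu)$ is non-trivial, which is alternative (2).

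The main obstacle I anticipate is the middle step: proving that $\psi(y) \ne \mathfrak g$ genuinely forces the stabilizer $N_G(S_0)$ to be a \emph{proper} algebraic subgroup (and that the whole picture is algebraic), and more fundamentally establishing the regularity of $\psi$ — i.e. that rigidity of $\omega$ plus analyticity makes $\Ev_y(\mathfrak z)$ vary nicely enough that the dimension function of $\psi$ is upper semicontinuous and $\psi$ is smooth where that dimension is constant. This is where Gromov's techniques from \cite{g1} (open-dense sets of "regular" points for rigid structures, extendability of local Killing fields) have to be imported and adapted to the non-unimodular, stationary setting; the rest is a concatenation of results already available in the excerpt.
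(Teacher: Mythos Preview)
Your argument has a genuine gap, and it is precisely the one the paper's proof is designed to close. You split into the cases $\mathfrak s_0 = \mathfrak g$ and $\mathfrak s_0 \subsetneq \mathfrak g$, and in the second case you assert that the stabilizer $N_G(\mathfrak s_0)$ is a proper algebraic subgroup ``because a proper subspace cannot be $\Ad(G)$-invariant for $G$ semisimple acting nontrivially''. This is false when $\mathfrak s_0 = \{0\}$: the zero subspace is $\Ad(G)$-invariant, its stabilizer is all of $G$, and the resulting map $M_0 \to \Gr(\mathfrak g)$ is constant. So your dichotomy is really a trichotomy, and the case $\psi(y) = 0$ almost surely yields no projective factor at all from your argument. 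Nothing you wrote excludes this case, and it cannot be excluded by purely formal considerations about the map $\psi$.

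The paper's proof supplies exactly the missing ingredient. It first reduces (via Proposition \ref{3 alternatives}) to the generically locally free case, since otherwise alternative (2) already holds. In that case it uses the structure of the stationary measure, writing $\nu = \tilde{\nu}_0 \ast \lambda$ with $\lambda$ a $P$-invariant probability measure, and then applies the generalized centralizer theorem (Theorem \ref{Evl=Evh}) to the measure-preserving action of $A_P N_P$ on $(M,\lambda)$. That theorem gives $\Ev_y(\text{centralizer of } \mathfrak a_P \oplus \mathfrak n_P) \supseteq \Ev_y(\mathfrak a_P \oplus \mathfrak n_P)$ for $\lambda$-a.e.\ $y$; Lemma \ref{epimorphic} (epimorphicity of $A_P N_P$ in $G$) identifies this centralizer with $\mathfrak z$, so $\psi(y) \supseteq \mathfrak a_P \oplus \mathfrak n_P \neq 0$ for $\lambda$-a.e.\ $y$, and then for $\nu$-a.e.\ $y$ by averaging. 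Only after $\psi(y) \neq 0$ is established does the argument you outlined (Remark \ref{smooth} plus the factor theorem for stationary measures on $G/L$) go through. In short, the heart of the theorem is showing $\psi \neq 0$, and that requires passing to the $P$-invariant measure and invoking Theorem \ref{Evl=Evh}; your proposal skips this step entirely.
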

\begin{proof}
We apply the trichotomy provided by Proposition \ref{3 alternatives} and the results above, as follows. 

First, since $\nu$ is stationary, we can let $\lambda$ be the corresponding 
$P$-invariant measure, where $P$ is a minimal parabolic subgroup (see \cite{nz1} or \cite{nz4} for a full discussion).  Then $\nu =\tilde{\nu}_0 \ast\lambda$, 
where $\tilde{\nu}_0$ is a (compactly supported, absolutely continuous)  
probability measure on $G$.  
 
 Consider first the case where  $G$ is essentially locally free on 
$(M, \nu)$ (and thus also on $\tilde{M}$). It then follows that, for $\tilde{\nu}_0$-\ale $g \in G$, 
$gPg^{-1}$ is essentially locally free on $(M, g_*\lambda)$.  Furthermore, 
for $\tilde{\nu}_0$-almost every $g$, 
 $g_*\lambda(M_0) = 1$, if $\nu(M_0) = 1$ for the generic set $M_0$.  
Thus, replacing $P$ and $\lambda$ by conjugates under some $g \in G$ 
if necessary, we may assume that $P$ is essentially locally free and 
$\lambda(M_0) = 1$. Applying Theorem \ref{Evl=Evh} to the action of $A_PN_P$, together with 
Lemma \ref{epimorphic} we conclude that $\psi(y)\neq 0$ for $\lambda$-almost all points $y\in M_0$.
Then since $\tilde{\nu}_0 \ast \lambda = \nu$, 
it follows easily that $\psi(y) \neq 0$ for $\nu$-almost every $y \in M_0$.
Now clearly $\dim \psi(m)$ is $\nu$-almost surely constant and if this constant is smaller than 
$\dim \frak g$
then we can apply the arguments noted in Remark \ref{smooth} to $\psi$ and conclude that 
we have a non-trivial generic smooth maximal projective factor.

Otherwise $\psi(y)=\mathfrak g$ 
$\nu$-almost surely, and we can again apply the argument of Remark \ref{smooth} 
to conclude that on a generic set $\psi(y)= \mathfrak g$, so that case (1) obtains. 

We can now therefore consider the case where $G$ is not essentially locally free. Since $\nu$ is stationary, any generic smooth quotient $G/L$ with $L$ 
a proper algebraic subgroup has a non-trivial homogeneous 
projective factor, i.e. a factor of the form $G/Q$ with $Q\subsetneq G$ 
a parabolic subgroup, as we saw in Proposition \ref{3 alternatives}. We therefore obtain a generic smooth non-trivial maximal projective factor, unless the stability groups are normal on a generic set. 
This is ruled out by our assumption that the $G$-action is faithful and the proof is complete. 

\end{proof}

Thus, Theorem \ref{psi-neq-0} allows us to reduce to the 
case $\psi(y) = \mathfrak g$ generically, a fact we will use below.

\section{Stability subalgebras of the normalizer}

We now add some further preparations before taking up the proof of Theorem \ref{main}. We return to considering $TM$, and from now on assume that the $G$-action is generically  
locally free, otherwise there already exists a non-trivial generic smooth maximal projective factor, 
according to Proposition \ref{3 alternatives} and our assumptions in Theorem \ref{main}. 
Then on a generic set we can identify the tangent bundle to the 
orbits of $G$, say $T{\cal O} \subseteq TM$ with 
$M \times \mathfrak g$, where $G$ acts on $\mathfrak g$ via $\Ad$.  
By Proposition  \ref{hull-Ad}, either $M$ has a non-trivial generic smooth projective factor  or 
$\text{Zcl}(\Ad(G))$ coincides (up to finite index) with the generic smooth algebraic hull of the action on 
$M \times \mathfrak g$, and hence on $T{\cal O}$.  
Similarly, consider the extension of the $G$-action to an action  on the principal bundle $P^{(k)}(M)$ whose fiber at each point is the space of $k$-frames.  Again, the algebraic hull of this action contains a group locally isomorphic to $G$.  This therefore enables us to obtain the 
conclusions of \cite[Thm. 4.2]{zproc} 
exactly as in the finite 
measure-preserving case.  Namely, we can assume that the smooth generic algebraic hull on $M\times \mathfrak g$ (and $T{\cal O}$) is in fact the Zariski closure of $\Ad(G)$ in 
$GL(\mathfrak g)$ (otherwise the generic smooth maximal projective factor is already non-trivial).

 \begin{remark} As noted already in the introduction,  we are repeatedly 
using the technique of asserting that either there is a non trivial projective factor, 
or we have a conclusion similar to the measure-preserving case. This was already applied to three maps from $M$ to projective varieties, namely 
to the map associating to a point the Lie algebra of its stability group, to the map $M\to G/L$ 
where $L$ is the algebraic hull on the tangent bundle to the $G$-orbits, and to the
 map $\psi$ to the Grassmann variety $Gr(\mathfrak g)$. We will also use this technique once more in the proof of Theorem \ref{main} below, applying it to the map  $y\mapsto {\mathfrak n}_y$
  defined by the kernel of the evaluation map, where $y\in \tilde{M}$. 
Of course, in the measure-preserving case, Borel density is used to
 eliminate the possibility of a projective quotient from the outset.
 \end{remark}
 
This technique yield also the following results, which we will use below. 

\begin{prop}
\label{stability algebras}{\bf Stability Lie algebras.}

Let $(M,\omega, G,\nu)$ be a rigid analytic stationary system. 
Then either the generic smooth maximal projective factor is non-trivial, or  
for $y_0$ in a generic set, there exists a Lie algebra 
$\mathfrak g_\Delta \subseteq \mathfrak n$ with the following properties:
\begin{enumerate}
\item All elements of $\mathfrak g_\Delta$ vanish at $y_0$; i.e., 
$\mathfrak g_\Delta \subseteq \mathfrak n_{0}$ (${\mathfrak n}_0={\mathfrak n}_{y_0}$ is 
the kernel of the evaluation map $X\mapsto X_{y_0}$).
\item $\mathfrak g_\Delta$ is isomorphic to $\mathfrak g$, 
and in fact is the graph in $\mathfrak n = \mathfrak g \oplus \mathfrak z$ 
of an injective homomorphism $\sigma : \mathfrak g \rightarrow \mathfrak z$. 
 We let $\mathfrak g' = \sigma(\mathfrak g)$ be the image in $\mathfrak z$.
\item Let ${\mathfrak z}_0={\mathfrak z}_{y_0}$ denote the kernel of the evaluation map.
Let $L$ be the generic smooth algebraic hull of the $G$-action on the principal $GL(\mathfrak n)$-bundle 
over $M$ associated with the representation of $\pi_1(M)\to \Gamma\subset GL(\mathfrak n)$. 
 Then  $\text{ad}_{\mathfrak n}(\mathfrak g_\Delta)$ acting on 
$\mathfrak z/\mathfrak z_0$ is contained in the Lie algebra 
$\Lie(L)$ of the algebraic hull, and $L$ is contained in $ \overline{\Gamma}=\text{Zcl}(\pi_1(M))\subset GL(\mathfrak n)$. 
 In particular, 
$\ad_{\mathfrak z}(\mathfrak g_\Delta) = \mathfrak g' \subseteq \Lie(L)
 \subseteq \Der(\mathfrak z; \mathfrak z_0)$, 
where the latter is the set of derivations of $\mathfrak z$ leaving $\mathfrak z_0$ invariant.
\end{enumerate}
\end{prop}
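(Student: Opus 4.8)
The plan is to build the subalgebra $\mathfrak g_\Delta$ by applying the centralizer-type results of \S 3 to a minimal parabolic, and then to track the resulting structure through the algebraic hull of the $\pi_1(M)$-representation on $\mathfrak n$. First I would dispose of the non-locally-free case: by Proposition \ref{3 alternatives} together with the faithfulness assumption, if the $G$-action is not generically locally free then the generic smooth maximal projective factor is already non-trivial and there is nothing to prove. So assume $G$ is generically locally free, and by Theorem \ref{2 alternatives} assume $\psi(y)=\mathfrak g$ on a generic set (otherwise, again, a non-trivial projective factor exists). The identity $\psi(y)=\mathfrak g$ says exactly that for generic $y$, $\Ev_y(\mathfrak g)\subseteq \Ev_y(\mathfrak z)$; since $\mathfrak n=\mathfrak g\oplus\mathfrak z$ and $G$ is locally free (so $\Ev_y$ is injective on $\mathfrak g$ and $\Ev_y(\mathfrak g)$ has dimension $\dim\mathfrak g$), this lets me define, for generic $y_0$, a linear map $\sigma=\sigma_{y_0}:\mathfrak g\to\mathfrak z$ by the requirement that $X+\sigma(X)$ vanish at $y_0$, i.e. $\Ev_{y_0}(X)=-\Ev_{y_0}(\sigma(X))$; injectivity of $\sigma$ is immediate since $X+\sigma(X)\in\mathfrak g_\Delta$ forces $X=0$ when $X\in\ker\sigma$. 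Setting $\mathfrak g_\Delta=\{X+\sigma(X):X\in\mathfrak g\}$ gives (1) by construction.

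For (2) — that $\mathfrak g_\Delta$ is actually a \emph{subalgebra} isomorphic to $\mathfrak g$, equivalently that $\sigma$ is a Lie-algebra homomorphism — the key observation is that $\mathfrak n_0=\mathfrak n_{y_0}$ (the kernel of $\Ev_{y_0}$ on $\mathfrak n$) is a Lie subalgebra of $\mathfrak n$, since Killing fields vanishing at a point are closed under bracket; and $\mathfrak g_\Delta\subseteq\mathfrak n_0$ by (1). Now $\dim\mathfrak g_\Delta=\dim\mathfrak g$, and the projection $\mathfrak n=\mathfrak g\oplus\mathfrak z\to\mathfrak g$ restricted to $\mathfrak g_\Delta$ is a linear isomorphism; I claim it is a Lie-algebra isomorphism. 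Indeed $\mathfrak g_\Delta$ being a subalgebra is not automatic from membership in $\mathfrak n_0$ — rather, one uses that $\psi(y)=\mathfrak g$ holds on a whole generic \emph{set}, so $\sigma_y$ varies (smoothly) with $y$, and the graph condition is $G$-equivariant and $\pi_1(M)$-invariant in the appropriate sense; combined with the fact that $\mathfrak n_0$ is a subalgebra and that $\dim\mathfrak n_0$ is forced to be exactly $\dim\mathfrak g$ on a generic set (by the upper-semicontinuity / generic-constancy argument of Remark \ref{smooth} applied to $y\mapsto\mathfrak n_y$, after again passing to a projective factor if this dimension is larger), one gets $\mathfrak g_\Delta=\mathfrak n_0$ generically, hence $\mathfrak g_\Delta$ is a subalgebra; projecting to $\mathfrak g$ and using semisimplicity of $\mathfrak g$ (so $\sigma$, being a linear section whose graph is a subalgebra, is forced to be a homomorphism) yields $\mathfrak g_\Delta\cong\mathfrak g$ and $\mathfrak g'=\sigma(\mathfrak g)$ as required. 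I would follow the corresponding step in \cite[Thm. 4.2]{zproc} closely here, since the argument is identical once the hull on $M\times\mathfrak g$ (and on the frame bundle $P^{(k)}(M)$) has been identified with $\mathrm{Zcl}(\Ad(G))$ as in \S 4.

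For (3), the point is that $\pi_1(M)$ acts on $\mathfrak n$ (hence on $\mathfrak z$ and on $\mathfrak z_0=\mathfrak z_{y_0}$, which is a submodule since vanishing at $y_0$ is preserved up to deck transformation), giving the representation $\pi_1(M)\to\Gamma\subset GL(\mathfrak n)$ and the associated principal $GL(\mathfrak n)$-bundle over $M$; let $L$ be its generic smooth algebraic hull. On one hand, $L\subseteq\mathrm{Zcl}(\Gamma)=\overline{\Gamma}$ by the general principle (the hull of a bundle associated to a $\pi_1$-representation lies in the Zariski closure of the image), exactly as in the example following Definition \ref{hull-def}. On the other hand, the $\tilde G$-action on $\mathfrak n$ commutes with $\pi_1(M)$ and induces, via the local-automorphism extension property defining $M_0$ in \S 3, a flat structure whose holonomy is constrained by the hull; concretely, differentiating the $\mathfrak g_\Delta$-action on the bundle and passing to the quotient $\mathfrak z/\mathfrak z_0$, the operators $\mathrm{ad}_{\mathfrak n}(\mathfrak g_\Delta)$ land in $\Lie(L)$ — this is the smooth/stationary analogue of the "centralizer is contained in the hull" step. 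Since $\mathfrak g_\Delta$ acts on $\mathfrak z$ by $\mathrm{ad}$ and this action is $\mathfrak g'=\sigma(\mathfrak g)$ (because $\mathfrak g$ itself centralizes $\mathfrak z$, so $\mathrm{ad}_{\mathfrak z}(X+\sigma(X))=\mathrm{ad}_{\mathfrak z}(\sigma(X))$), and since $\mathfrak z_0$ is $\mathrm{ad}(\mathfrak g_\Delta)$-invariant (it is a subalgebra-submodule), we get $\mathfrak g'\subseteq\Lie(L)\subseteq\Der(\mathfrak z;\mathfrak z_0)$.

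The main obstacle, as I see it, is the passage from "$\psi(y)=\mathfrak g$ generically" to "$\mathfrak g_\Delta$ is a Lie subalgebra isomorphic to $\mathfrak g$", i.e. establishing that $\sigma$ is a homomorphism and not merely a linear map: one genuinely needs that the graph condition holds on a generic \emph{open} set (so that brackets of the relevant vector fields can be computed and evaluated nearby), that $\mathfrak n_0$ is exactly $\dim\mathfrak g$-dimensional generically (forcing $\mathfrak g_\Delta=\mathfrak n_0$), and that the extension-of-infinitesimal-automorphisms property of \S 3 is available at $y_0$. This is precisely where Gromov's machinery as adapted in \cite{zproc} does the work, and I would import it rather than reprove it; the novelty in the present setting is only that the relevant generic sets arise from Theorem \ref{2 alternatives} and Proposition \ref{3 alternatives} (via the "projective factor or measure-preserving-like conclusion" dichotomy) rather than from an invariant volume form and Borel density.
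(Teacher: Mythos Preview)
Your overall strategy matches the paper's proof, which is extremely terse: it simply observes that once the maximal projective factor is assumed trivial, Proposition~\ref{hull-Ad} (together with Proposition~\ref{3 alternatives}) forces generic local freeness and identifies the generic smooth algebraic hull on $M\times\mathfrak g$ (hence on $T\mathcal O$ and on $P^{(k)}(M)$) with $\mathrm{Zcl}(\Ad G)$, after which the argument of \cite[Thm.~4.2, Cor.~4.3, Thm.~1.5]{zproc} carries over verbatim. You arrive at the same reduction via Proposition~\ref{3 alternatives} and Theorem~\ref{2 alternatives}, and you correctly say you would then import \cite{zproc}; so as a proof strategy your proposal is sound.

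That said, your explicit sketch of how $\mathfrak g_\Delta$ is built contains a genuine gap. Defining $\sigma:\mathfrak g\to\mathfrak z$ by ``$X+\sigma(X)$ vanishes at $y_0$'' is only well-defined if $\Ev_{y_0}|_{\mathfrak z}$ is injective, i.e.\ if $\mathfrak z_0=0$; otherwise $\sigma(X)$ is determined only modulo $\mathfrak z_0$. You attempt to rescue this by arguing $\dim\mathfrak n_0=\dim\mathfrak g$ generically (which would force $\mathfrak z_0=0$ and $\mathfrak g_\Delta=\mathfrak n_0$), but the proposition explicitly allows $\mathfrak z_0\neq 0$ --- it appears in part~(3) --- and your semicontinuity argument does not deliver this: since $G$ acts trivially on $\mathfrak z$, the map $y\mapsto\mathfrak n_y$ can land in a single $G$-orbit in $\Gr(\mathfrak n)$ without $\dim\mathfrak n_0$ being forced down to $\dim\mathfrak g$. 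In fact $\dim\mathfrak n_0=\dim\mathfrak g+\dim\mathfrak z_0$ whenever $\psi(y_0)=\mathfrak g$ and the action is locally free, so your claim $\mathfrak g_\Delta=\mathfrak n_0$ is generally false.

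The actual mechanism in \cite{zproc} is different: one works on the higher frame bundle $P^{(k)}(M)$, where the hull being $\mathrm{Zcl}(\Ad G)$ means the group of local $\omega'$-automorphisms fixing a frame over $y_0$ has Lie algebra containing a copy of $\mathfrak g$. These are Killing fields in $\mathfrak n$ vanishing at $y_0$, and they already form a Lie subalgebra isomorphic to $\mathfrak g$ --- this is what singles out $\mathfrak g_\Delta$ inside $\mathfrak n_0$ and makes $\sigma$ a genuine homomorphism, without any need for $\mathfrak z_0=0$. Since you do defer to \cite{zproc} in the end, the proof survives; but the picture you give of $\sigma$ as a naive linear lift determined by vanishing is not the right one.
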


\begin{proof}
Starting with a 
simple $G$, by Proposition \ref{hull-Ad} if the generic smooth maximal projective factor is trivial, then the action is generically locally free, and the generic 
smooth algebraic hull of the tangent bundle to the orbits coincides with $\text{Zcl}(\Ad(G))$. 
Given these facts, the method of proof of \cite[Thm. 4.2, Cor. 4.3, Thm. 1.5]{zproc}  applies without change.
  \end{proof}
  
  We let $G' \subseteq L$ and $G_\Delta \subset  G \times L$ be the 
corresponding groups, where $G_\Delta$ is the graph of $\sigma : G \rightarrow G' \subseteq L$.

\begin{remark}
Let us note that a conclusion similar to that of Proposition \ref{stability algebras} holds for any connected Lie group $H$ with finite center, provided its action is ergodic w.r.t. a quasi-invariant measure of full support, generically locally free, and the generic smooth algebraic hull along the orbits coincides with with $\text{Zcl}(\Ad(H))$ (up to finite index). Under these conditions either the rigid analytic system admits an $H$-equivariant  non-constant map to a projective variety, or the conclusions 1, 2 and 3 holds. 
When $H$ is simple,  such a map necessarily gives rise to a {\it compact homogeneous} projective factor, as stated.   
\end{remark}

We now note that the following result on properness of the action on the universal cover, in analogy with   \cite[Cor. 4.5]{zproc}, which proves properness 
in the unimodular,  measure-preserving case.

\begin{prop}\label{properness}{\bf Generic properness on the universal cover}.

Let $(M,\omega, G,\nu)$ be a rigid analytic stationary system. 
Then either the generic smooth maximal projective factor is non-trivial, 
or the $\tilde{G}$-action on $\tilde{M}$ is (locally free and) proper on a generic set $M_0$. Namely, generically the stability groups in $\tilde{G}$ of points in $\tilde{M}$ are compact, 
and the orbit map $\tilde{G}/\tilde{G}_y\to \tilde{G}\cdot y$ is a diffeomorphism. The same conclusion obtains also 
under the assumptions in the last part  of Proposition 
\ref{stability algebras}. 
\end{prop}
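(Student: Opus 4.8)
The plan is to adapt the scheme of \cite[Cor. 4.5]{zproc}, replacing the use of the invariant volume form (which enters there only to exclude projective quotients at the outset) by the dichotomies established above. I would first assume the generic smooth maximal projective factor is trivial, so that by Proposition \ref{3 alternatives} and Proposition \ref{hull-Ad} the $G$-action is generically locally free and the conclusions of Proposition \ref{stability algebras} are available. Fix $y_0$ in the generic set furnished there, together with the subalgebra $\mathfrak g_\Delta \subset \mathfrak n$, isomorphic to $\mathfrak g$, vanishing at $y_0$, and realized as the graph of an injective homomorphism $\sigma : \mathfrak g \to \mathfrak z$ with image $\mathfrak g' = \sigma(\mathfrak g)$. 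Since $\mathfrak g'$ centralizes $\mathfrak g$ and is isomorphic to it, $\mathfrak n' := \mathfrak g \oplus \mathfrak g' \subseteq \mathfrak n$ is a subalgebra isomorphic to $\mathfrak g \oplus \mathfrak g$, inside which $\mathfrak g$ is the first summand and $\mathfrak g_\Delta$ is the graph of $\sigma$. Because $\mathfrak g_\Delta$ vanishes at $y_0$ one has $\Ev_{y_0}(\mathfrak g) = \Ev_{y_0}(\mathfrak g')$, hence $\Ev_{y_0}(\mathfrak n') = \Ev_{y_0}(\mathfrak g)$, which has dimension $\dim\mathfrak g$ by local freeness; and by $\tilde G$-equivariance of the construction (note $\mathfrak g'$, lying in $\mathfrak z$, is $\tilde G$-fixed) the same relations hold along the whole orbit $\tilde G\cdot y_0$.

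Next I would invoke Gromov's structure theory for rigid analytic structures (\cite{g1}, \cite[§2, Cor. 4.5]{zproc}): after passing to a smaller generic $\tilde G$-invariant set $M_0 \subseteq \tilde M$, the distribution $y \mapsto \Ev_y(\mathfrak n')$ has locally constant rank $\dim\mathfrak g$, is integrable, and its leaf $\mathcal L$ through $y_0$ is an embedded submanifold carrying a transitive \emph{local} action of the connected group $N'$ with Lie algebra $\mathfrak n'$. By the previous paragraph the orbit $\tilde G\cdot y_0$ is tangent to this distribution, so $\tilde G\cdot y_0 \subseteq \mathcal L$ and $\dim\mathcal L = \dim\tilde G$. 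Writing $N' \cong (\tilde G\times\tilde G)/(\text{central finite})$ with $\tilde G$ the first factor and $G_\Delta$ the graph of $\sigma$, one has $N' = \tilde G\cdot G_\Delta$ and $N'/G_\Delta \cong \tilde G$, under which the first factor acts by left translation; since $G_\Delta$ fixes $y_0$, the stabilizer of $y_0$ in the $N'$-action on $\mathcal L$ contains $G_\Delta$, and the dimension count forces it to be a discrete extension of $G_\Delta$. Hence $\tilde G$ acts transitively on $\mathcal L$, so $\tilde G\cdot y_0 = \mathcal L$ is an embedded submanifold of $\tilde M$, while its $\tilde G$-stabilizer, being $\tilde G\cap N'_{y_0}$, is discrete because $\tilde G\cap G_\Delta = \{e\}$ (as $\sigma$ is injective) --- consistent with local freeness. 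Feeding back the slice structure supplied by Gromov's theory --- in suitable coordinates $\tilde M$ is locally $\mathcal L\times(\text{transverse})$ with $\tilde G$ acting by translation on the $\mathcal L$-factor --- produces a $\tilde G$-slice at $y_0$, and since $y_0$ ranges over a $\tilde G$-invariant generic set, this is precisely the assertion that the $\tilde G$-action is proper on $M_0$; in particular the stabilizers are compact and the orbit maps are diffeomorphisms, as claimed. The statement under the hypotheses of the last part of Proposition \ref{stability algebras} would follow by the same argument applied to $H$ in place of $G$, the only extra input being the finiteness of the center of $H$, used for compactness of the stabilizers.

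The main obstacle is the structural input of the second paragraph: that on a generic set the leaves are embedded and carry a genuine transitive local action of $N'$, equivalently the requisite completeness and extension properties of the Killing fields involved, together with the existence of $\tilde G$-slices. This is exactly Gromov's analysis of rigid analytic structures and its refinement in \cite[§2, Cor. 4.5]{zproc}; it is local in nature and makes no use of an invariant volume form, so it transfers without change to the non-unimodular setting --- but it is the substantive point, the surrounding identification of the orbit through the graph algebra $\mathfrak g_\Delta$ being formal.
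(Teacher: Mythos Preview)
Your proposal is correct and follows essentially the same route as the paper: both assume the maximal projective factor is trivial, invoke generic local freeness and the fact that the centralizer's evaluation covers the $\tilde G$-orbit tangent (the paper phrases this as $\psi(y)=\mathfrak g$ via Theorem \ref{2 alternatives}, you via the graph algebra $\mathfrak g_\Delta$ from Proposition \ref{stability algebras}, which is the same content), and then defer the substantive analytic step to the argument of \cite[Cor.~4.5]{zproc}, noting that it uses no invariant volume. The paper's own proof is in fact just a two-sentence pointer to that argument plus the citation of \cite[Thm.~2.9.7]{Va} for the passage from locally closed orbit to diffeomorphic orbit map; your version simply unpacks more of what that argument contains.
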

\begin{proof}
Assume the generic smooth maximal projective factor is trivial.  We first claim that  the stability groups are generically compact (so in our situation, finite), and the orbit maps  $\tilde{G}/\tilde{G}_y \mapsto \tilde{G}\cdot y \subset \tilde{M}$, are homeomorphisms (so that orbits are locally closed). Indeed, this follows from the argument in \cite[Cor. 4.5]{zproc}, which uses only generic local freeness of the $G$-action, together with the fact that generically $\psi(y)=\mathfrak g$, and its consequence that the local orbits of the centralizer covers the local orbits of $\tilde{G}$.  Now if the orbit $\tilde{G}y$ is locally closed, then it is a regular submanifold of $\tilde{M}$ 
 (see \cite[Thm. 2.9.7]{Va}), of dimension $\dim G$ and the orbit map is a diffeomorphism.  
\end{proof}

\section{Proof of Theorem \ref{main}, Theorem \ref{rank1} and Theorem \ref{Q}}

{\bf Proof of Theorem \ref{main}}. 

Theorem \ref{main} follows immediately from Proposition \ref{stability algebras}. Indeed,  assume that the generic smooth maximal projective factor of the rigid analytic stationary system is trivial. Then the group $L$ defined as the generic smooth algebraic hull of the $G$-action on the principal $GL({\mathfrak n})$-bundle associated with the representation  $\pi_1(M)\to \Gamma\subset GL({\mathfrak n})$ has the property that it is contained in the Zariski closure $\overline{\Gamma}$ in $GL({\mathfrak n})$. On the other hand, the Lie algebra of $L$ contains the Lie algebra ${\mathfrak g}^\prime$ which is an image of $\mathfrak g$ under an injective homomorphism, and hence is isomorphic to $\mathfrak g$.    

The last statement follows from the fact that the Zariski closure of a virtually solvable group has a solvable Lie algebra, which therefore cannot contain a copy of the simple Lie algebra of  $\mathfrak g$ as a subalgebra. \qed

\vskip0.1truein

\noindent {\bf Proof of Theorem \ref{rank1}}.

 Fix $G=SO^0(n,1)$, $n\ge 2$, and let $\Gamma$ be a uniform lattice 
in $G$ which maps onto a free group on $k\ge 2$ generators. Note that any uniform lattice in $G$ admits a finite index subgroup which maps onto a non-Abelian free group. Now fix a uniform lattice $\Delta\subset SO^0(1,m)$, $m\ge 4$ which has a generating set with $k$ elements. Consider the surjective homomorphism from $\Gamma$ to $\Delta$ obtained by composition, and let $\Gamma $ act on the $(m-1)$-dimensional sphere via the action of $\Delta \subset SO^0(1,m)$ as a group of conformal transformations.  As is well-known, the $\Delta$-action, and hence also the $\Gamma$ action, is minimal and has no invariant probability measure.  Let $M$ denote the action of $G$ induced by the action of $\Gamma$ on the $(m-1)$-dimensional sphere. It has been shown in \cite[Thm. B]{nz1} that $G$ has a unique stationary measure on $M$, which is thus ergodic. This measure is not invariant, since the sphere does not admit a $\Gamma$-invariant measure. Since the $\Gamma$-action is minimal on the sphere, the $G$-action is minimal on $M$, and hence any $G$-quasi-invariant  measure has full support. $M$ has the structure of a smooth bundle over $G/\Gamma$ with typical fiber an $(m-1)$-dimensional sphere, and $M$ also has the structure of an analytic manifold in the obvious way. To produce an invariant rigid analytic geometric structure, take the product of the pseudo-Riemannain structure associated with the Killing form on $G/\Gamma$, and the conformal structure on the sphere. When $m\ge 4$ the conformal structure is of finite type and hence rigid, and we refer to \cite[\S 3]{fz} for a full discussion of the rigidity of the product structure. This concludes the proof of Theorem \ref{rank1}. 
\qed

\vskip0.1truein
\noindent {\bf Proof of Theorem \ref{Q}}. 

We divide the proof into the following steps. 
\begin{enumerate}
\item Let the Lie group $G$ act by bundle automorphisms on the principal bundle $\tilde{M}\to M$ with fiber $\pi_1(M)$, with an ergodic stationary measure  on the base $M$ of full support.  Let  $\Gamma$ be the image of the fundamental group $\pi_1(M)$ under the representation into 
$ GL(\mathfrak z)$. Consider the associated principal $GL(\mathfrak z)$-bundle over $ M$ with fiber $GL(\mathfrak z)$, on which $G$ acts by bundle automorphisms. Then the (measurable) algebraic hull $L_\nu$ of the $G$-action on the $GL(\mathfrak z)$-bundle is contained in the Zariski closure $\overline{\Gamma}$ of $\Gamma$ in $GL(\mathfrak z)$ (see  \cite[Prop. 3.4]{zproc}).  To prove Theorem \ref{Q}, we first assume that the action is generically locally free, and then  it suffices to prove that the Lie algebra  ${\mathfrak l}_\nu$ of the algebraic hull $L_\nu$ has a factor algebra which contains (an isomorphic copy of) the Lie algebra $\mathfrak q_0$ of the group $Q_0\subset Q$. We are assuming that 
 $Q=Q_\lambda=Q(X,\nu) $ is the parabolic subgroup stabilizing the measure $\lambda$ 
 canonically associated to $\nu$. Note that when $\nu$ is invariant we have $\nu=\lambda$ and so $Q=G$, and we obtain Gromov's representation theorem (see \cite{g1} or \cite[Thm. 1.5]{zproc}). 

\item  Let us denote by $\Psi$ the sub-bundle of the tangent bundle to $M$ whose fiber at a given point $y\in M$ is $\psi(y)$. Recall that $\psi(y)$ consists of the vectors in the tangent space to the $G$-orbit at $y$ which are obtained via evaluation of the global Killing vector fields in $\mathfrak z$ at $y$. Thus by definition of $\psi$, $\Psi$  is actually a sub-bundle of the tangent bundle to the $G$-orbits in $M$. The (measurable) algebraic hull $L^\prime_\nu$ of the $G$-action on the bundle $\Psi$ is a factor group of $L_\nu$, and we claim that $\mathfrak l^\prime_\nu$ contains $\mathfrak q_0$.
  
To establish that, we consider the ergodic measure-preserving action of $Q_0$ on $(M,\lambda)$. 
$Q_0$ satisfies the condition of Borel density theorem, and hence by Theorem \ref{Evl=Evh}, for $\lambda$-almost all points $y$, the evaluation of the global Killing vectors fields centralizing the action of $Q_0$ covers the local orbits of $Q_0$. But by Theorem \ref{epimorphic}, since $A_QN_Q$ is epimorphic for every proper parabolic subgroup $Q$, it follows that a global Killing field normalizing $\mathfrak g$ and centralizing $\mathfrak q_0$ necessarily centralizes $\mathfrak g$. We conclude that for $\lambda$-almost all points $y$, $\psi(y)$ covers the local orbits of $Q_0$. The $Q_0$-action on $(M,\lambda)$ is  
essentially locally free, so we deduce by the Borel density property that the (measurable) algebraic hull of the $Q_0$-action on $\Psi$ contains the Zariski closure of  $\Ad(Q_0)$, a group locally isomorphic to 
$Q_0$.

\item By assumption the maximal projective factor of $(M,\nu)$ in the measurable category is $G/Q(X,\nu)$, and $M\to G/Q(X,\nu)$ is a measure-preserving extension. Thus the $G$-action on $(M,\nu)$ is induced from the $Q$-action on $(M,\lambda)$, and it follows that the (measurable) algebraic hull of the $G$-action on the bundle $\Psi$ over $(M,\nu)$ contains the algebraic hull of the $Q$-action on the bundle $\Psi$ over $(M,\lambda)$. Thus it also contains the algebraic hull of the $Q_0$-action on the bundle $\Psi$ over $(M,\lambda)$. Taking parts 1) and 2) into account, we have established that $\mathfrak q_0$ 
is contained in the Lie algebra $L^\prime_\nu$, which is a factor of the  subalgebra ${\mathfrak l}_\nu$ of the Lie algebra of  $\overline{\Gamma}$.

\item Finally, if the action is not generically free, then the first case of Proposition \ref{3 alternatives} applies, and generically, the connected component of every stability group is conjugate to a fixed connected group $S$, with $N_G(S)\subset Q_s$, $Q_s$ 
defining the generic smooth maximal projective factor, which is of course non-trivial in this case.  
Let $\phi: M_0\to G/Q_s$ be the equivariant smooth factor map, and let $Y=\phi^{-1}([Q_s])$. Then 
$Y$ is a smooth $Q_s$-invariant regular submanifold, and $M_0$ is equivariantly diffeomorphic 
to the $G$-action on the manifold $(G\times Y)/Q_s$, namely the $G$-action induced from the action of 
$Q_s$ on $Y$. This fact is general and follows simply from the existence of a smooth equivariant 
map onto a homogeneous space.  
\end{enumerate}
This concludes the proof of Theorem \ref{Q}. \qed 

Let us formulate some corollaries of Theorem \ref{main} and Theorem \ref{Q}, 
 explicating the consequences of restrictions imposed on the fundamental group of $M$. 
 We begin with the following result, establishing that certain rigid analytic stationary actions must have 
 stability groups of positive dimension.  

\begin{theorem}\label{amenable factor}{\bf Manifolds with nilpotent fundamental group.}
Let $(M,\omega, G,\nu)$ be a rigid analytic stationary system, and keep the assumptions of  Theorem \ref{Q}. Suppose further that $\pi_1(M)$ is virtually nilpotent.  Then:
\begin{enumerate}
\item The $G$-action on $(M, \nu)$
 is {\em not} generically locally free.
\item The generic smooth maximal projective factor is non-trivial (and in particular, the stationary measure is not invariant). 
\end{enumerate}
\end{theorem}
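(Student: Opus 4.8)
The plan is to derive Theorem \ref{amenable factor} directly from Theorem \ref{Q} by obstructing the first alternative in its dichotomy. Suppose, for contradiction, that the $G$-action on $(M,\nu)$ is generically locally free. Then case (1) of Theorem \ref{Q} applies: the Lie algebra $\text{Lie}(\overline{\Gamma})$ of the Zariski closure of the Gromov representation $\Gamma \subset GL(\mathfrak z)$ has $\mathfrak q_0$ as a sub-quotient — i.e., it contains a subalgebra admitting a factor algebra into which $\mathfrak q_0$ embeds. Since $Q$ is a proper parabolic subgroup of the simple (almost simple) group $G$, the group $Q_0 = M_Q' A_Q N_Q$ is non-trivial, and moreover $\mathfrak q_0$ contains the non-abelian subalgebra $\mathfrak a_Q \oplus \mathfrak n_Q$ together with the non-compact semisimple part $\mathfrak m_Q'$; in either case $\mathfrak q_0$ is visibly not nilpotent (indeed $[\mathfrak a_Q, \mathfrak n_Q] \neq 0$ already makes $\mathfrak a_Q \oplus \mathfrak n_Q$ non-nilpotent, and if $Q_0$ reduces to $A_Q N_Q$ with $N_Q$ abelian we still have a non-nilpotent solvable algebra).

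The key point is then that $\pi_1(M)$ virtually nilpotent forces $\text{Lie}(\overline{\Gamma})$ to be nilpotent, which is incompatible with having a non-nilpotent sub-quotient. Concretely: a virtually nilpotent group has a finite-index nilpotent subgroup $\Gamma'$; the Zariski closure $\overline{\Gamma'}$ is a nilpotent algebraic group (the Zariski closure of a nilpotent group is nilpotent), hence has nilpotent Lie algebra; and $\overline{\Gamma'}$ has finite index in $\overline{\Gamma}$, so the two have the same Lie algebra. Thus $\text{Lie}(\overline{\Gamma})$ is nilpotent. But nilpotency passes to subalgebras and to quotient algebras, so every sub-quotient of $\text{Lie}(\overline{\Gamma})$ is nilpotent — contradicting the fact that $\mathfrak q_0$ (a sub-quotient by case (1)) is not nilpotent. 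Therefore the generically locally free case cannot occur, proving part (1). For part (2), once part (1) is established we invoke the trichotomy of Proposition \ref{3 alternatives}: the action is not generically locally free, so either the identity components of the stability groups are generically a fixed normal subgroup $G_1 \subset G$ of positive dimension, or we are in alternative (1) of that proposition, in which case the generic smooth maximal projective factor $G/Q_s$ is non-trivial. The former is excluded because $G$ is almost simple and its action on $M$ is faithful (part of the definition of a rigid analytic stationary system), so $G$ has no positive-dimensional normal subgroup acting trivially. Hence alternative (1) of Proposition \ref{3 alternatives} holds and the generic smooth maximal projective factor is non-trivial; since a non-trivial projective factor $G/Q_s$ with $Q_s$ proper carries no $G$-invariant probability measure, $\nu$ cannot be $G$-invariant.

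The only genuine subtlety — and the step I would be most careful about — is the verification that $\mathfrak q_0$ is non-nilpotent in \emph{all} cases of a proper parabolic $Q$, including the possibility raised in the text that $Q_\lambda$ reduces to a minimal parabolic $P$. For a minimal parabolic $P = M_P A_P N_P$ one has $M_P$ compact, so $P_0 = A_P N_P$; but $A_P$ is a maximal split torus acting on $\mathfrak n_P$ with non-zero (restricted root) weights, so $[\mathfrak a_P, \mathfrak n_P] = \mathfrak n_P \neq 0$ and $\mathfrak a_P \oplus \mathfrak n_P$ is solvable but not nilpotent. More generally, for any proper $Q$ the center of $A_Q$ (which is non-trivial since $Q \neq G$) acts on $\mathfrak n_Q$ with non-trivial weights, so the same argument applies; alternatively, if $M_Q'$ is non-trivial then $\mathfrak q_0$ contains a non-trivial semisimple subalgebra and is a fortiori non-nilpotent. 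This case-check is routine structure theory of parabolic subgroups, so I do not expect a real obstacle here — the proof is essentially a clean deduction from Theorem \ref{Q} plus the elementary fact that nilpotency is inherited by sub-quotients while $\mathfrak q_0$ is not nilpotent.
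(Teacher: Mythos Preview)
Your proof is correct and follows essentially the same approach as the paper: rule out case (1) of Theorem \ref{Q} by observing that $\mathfrak q_0$ is not nilpotent while $\text{Lie}(\overline{\Gamma})$ must be nilpotent, then read off both conclusions. The only minor differences are that the paper's argument is terser (it simply notes that $AN$ from a minimal parabolic sits inside $\mathfrak q_0$ and is solvable non-nilpotent, rather than arguing directly with $\mathfrak a_Q \oplus \mathfrak n_Q$), and that for part (2) the paper implicitly invokes case (2) of Theorem \ref{Q} itself---which already asserts the non-triviality of the generic smooth maximal projective factor---whereas you go back to Proposition \ref{3 alternatives}; these are equivalent since case (2) of Theorem \ref{Q} is proved precisely via that proposition.
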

\begin{proof}
Note that if the action is generically locally free, then by Theorem \ref{Q} the Zariski closure of the image of $\pi_1(M)$ in $GL(\mathfrak z)$ must have $AN$ as a sub-quotient, where $P=MAN$ is a minimal parabolic subgroup. But $AN$ is a solvable connected Lie group which is not nilpotent, and the Zariski closure of a virtually nilpotent group is virtually nilpotent. 
  \end{proof}

We recall that one case where the stationary system $(M,\nu)$ is a measure-preserving extension 
of its maximal measurable projective factor (so that Theorem \ref{amenable factor} applies) is when the action of $G$ is amenable. Such an action is necessarily induced by an action of a minimal parabolic group (see \cite{nz3} for further discussion). 


%
%
Conversely,  any action of a minimal parabolic subgroup $P$ with invariant probability measure induces 
an amenable action of $G$. However the question of whether the induced action carries a $G$-invariant 
rigid analytic structure if the original $P$-action has an invariant rigid analytic structure  
has not been resolved.  The following consequence of Theorem \ref{amenable factor} is relevant in this direction. 


\begin{cor}{\bf Rigid analytic actions of a minimal parabolic subgroup.}
Let $(M_0,\omega)$ be a compact manifold with rigid analytic geometric structure of algebraic type
and virtually nilpotent fundamental group. Assume $P\subset \text{Aut}(M_0,\omega)$, where $P$ is a minimal parabolic subgroup of a simple non-compact Lie group. Let $\lambda$ be a $P$-invariant ergodic measure on $M_0$ of full support. If the induced action carries a $G$-invariant  rigid analytic structure, then generically points in $M_0$ 
have a stability group of positive dimension. 
\end{cor}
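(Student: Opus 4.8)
The plan is to manufacture, from the given data, a rigid analytic stationary system of the kind covered by Theorem~\ref{amenable factor}, apply that theorem, and then pull the conclusion back to the fibre $M_0$. First I would form the induced $G$-space $M=G\times_P M_0$, a compact connected analytic manifold fibred over $G/P$ with fibre $M_0$, carrying the hypothesised $G$-invariant rigid analytic structure $\omega$ of algebraic type. On $M$ take the induced $\mu$-stationary measure $\nu$: its disintegration over the unique stationary measure $\nu_0$ on $G/P$ has $\lambda$ as conditional measure over the base point $eP$; it is ergodic because $\lambda$ is $P$-ergodic; and it has full support, because $\mathrm{supp}(\nu)$ is closed and $G$-invariant, surjects onto $G/P$, and meets the fibre over $eP$ in $\mathrm{supp}(\lambda)=M_0$, so $\mathrm{supp}(\nu)=G\cdot M_0=M$. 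Hence $(M,\omega,G,\nu)$ is a rigid analytic stationary system. (If the induced $G$-action fails to be faithful one replaces $G$ by its image, still a connected non-compact almost simple group with finite fundamental group, and nothing below is affected; we also assume $M_0$ connected, the general case being handled componentwise.)

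Next I would check the remaining hypotheses of Theorem~\ref{amenable factor}. For the fundamental group: the homotopy exact sequence of the fibration $M_0\to M\to G/P$ yields a short exact sequence $1\to A\to\pi_1(M)\to\pi_1(G/P)\to 1$ with $A$ a quotient of $\pi_1(M_0)$; moreover $\pi_1(G/P)$ is finite, since by the Iwasawa decomposition $G$ deformation retracts onto its maximal compact $K$ (so $\pi_1(K)=\pi_1(G)$ is finite, this being where the standing assumption that $G$ has finite fundamental group enters) and $G/P\cong K/(K\cap P)$. Thus $\pi_1(M)$ is a finite extension of a quotient of the virtually nilpotent group $\pi_1(M_0)$, hence virtually nilpotent. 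For the hypothesis of Theorem~\ref{Q}: the minimal parabolic $P=MAN$ is amenable and $\lambda$ is a $P$-invariant probability measure, so the $P$-action on $(M_0,\lambda)$ is amenable; consequently the induced $G$-action on $(M,\nu)$ is amenable, and by the discussion preceding this corollary an amenable rigid analytic stationary system is a measure-preserving extension of its maximal measurable projective factor. So Theorem~\ref{amenable factor} applies to $(M,\omega,G,\nu)$.

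It then follows that the $G$-action on $(M,\nu)$ is not generically locally free; by Proposition~\ref{3 alternatives} we are in its case (1) or (2), and in either case the $\nu$-almost-sure dimension $d_0$ of the stability groups $\mathrm{st}_G(x)$ is strictly positive. Since $\nu$ has full support, the $G$-invariant open set $\{x\in M:\dim\mathrm{st}_G(x)\le d_0-1\}$ is empty (as in the argument preceding Proposition~\ref{3 alternatives}), so $\dim\mathrm{st}_G(x)\ge d_0\ge 1$ for \emph{every} $x\in M$. Finally, embedding $M_0$ in $M$ as the fibre over $eP$ via $m\mapsto[e,m]$, one has $g\cdot[e,m]=[e,m]$ precisely when $g\in P$ and $gm=m$, i.e.\ $\mathrm{st}_G([e,m])=\mathrm{st}_P(m)$. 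Therefore $\dim\mathrm{st}_P(m)\ge d_0>0$ for all $m\in M_0$, which is even stronger than the asserted generic statement.

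The mathematical content is borrowed wholesale: it sits in Theorem~\ref{amenable factor}, and through it in Theorem~\ref{Q} and the generalized centralizer Theorem~\ref{Evl=Evh}; the present argument is just the bookkeeping of the induction. The one point deserving a little care, and thus the main obstacle, is verifying that the induced system $(M,\omega,G,\nu)$ genuinely is a rigid analytic stationary system satisfying the hypotheses of Theorem~\ref{Q} — concretely, that $\nu$ has full support and that amenability of the $P$-action forces the measure-preserving-extension property — but both of these are routine.
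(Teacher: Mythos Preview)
Your proof is correct and follows essentially the same approach as the paper: form the induced $G$-space $M=G\times_P M_0$, verify that it is a rigid analytic stationary system with virtually nilpotent fundamental group which is a measure-preserving extension of $G/P$, and invoke Theorem~\ref{amenable factor}. The only difference is that the paper argues by contradiction (assume $P$ acts generically locally freely, hence so does $G$, contradicting Theorem~\ref{amenable factor}), whereas you argue directly and use the full-support upper-semicontinuity trick to upgrade the conclusion from ``generically'' to ``everywhere''; this is a cosmetic variation, and your version is in fact slightly sharper.
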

\begin{proof} Assume for contradiction that the action of $P$ on $M_0$ is generically locally free. Then so is the action of $G$ on the manifold $M=M_0\times_P G$, namely the action induced to $G$, with the associated stationary measure (see \cite{nz1}). The manifold $M$ still has a virtually nilpotent fundamental group and a rigid analytic geometric structure of algebraic type preserved by $G$, by assumption. The stationary measure is ergodic and of full support, and $M$ is a measure-preserving extension of $G/P$. Hence Theorem \ref{amenable factor} applies, but the first alternative is ruled out by the fact that $\pi_1(M)$ is virtually nilpotent, and the second alternative is ruled 
out also since the $G$-action is locally free. This contradiction implies that the $P$ action on $M_0$ has stability groups of positive dimension, generically. 
\end{proof}

\begin{remark}

Finally, let us note some consequences that hold whenever the maximal generic smooth projective factor of a rigid analytic stationary system is non-trivial. 
\begin{enumerate}
\item The rigid geometric structure in question cannot be unimodular. Indeed, otherwise it defines an invariant volume form of finite total mass, and this would imply that an invariant probability measure exists on the projective factor, which is not the case. 
\item As a result, under the first alternative in Theorem \ref{main},  or under Theorem \ref{amenable factor}, the rigid structure in question cannot be unimodular.  More generally, $M$ does not carry any invariant probability measure of full support.  
\item It follows also that the top exterior power of the derivative cocycle cannot be cohomologous to a trivial cocycle, with integrable transfer function.  
\end{enumerate}
\end{remark}


\begin{thebibliography}{CQB}

\bibitem[BN]{bn} U. Bader and A. Nevo, {\it Conformal actions of simple Lie groups on compact pseudo-Riemannian manifolds}. J. Differential Geometry, vol. 60, pp. 355-387 (2002). 


\bibitem[DG]{dg} G.  D'ambra and M. Gromov,
 {\it Lectures on transformation groups : geometry and dynamics}. 
Surveys in differential geometry (cambridge MA, 1990), pp. 19-111, 
Lehigh University, Bethlehem PA, 1991. 



\bibitem[CQB]{cqb} A. Candel and R. Quiroga-Barranco, 
 {\it Gromov's centralizer theorem}, Geom. Dedicata, vol. 100, pp. 123-155 (2003).




\bibitem[Da1]{dani} S. G. Dani, {\it } {\it On conjugacy classes of closed subgroups and stabilizers of Borel actions of Lie groups}. Erg. Th. Dyn. Sys. vol. 22, pp. 1696-1714 (2002). 




\bibitem[F]{feres} R. Feres, {\it Rigid geometric structures 
and actions of semisimple Lie groups}. Rigidit\'e, groupe fondamental et dynamique, pp. 121-167, 
Panor. Synth\`eses, vol. 13, Soc. Math. France, Paris, 2002.  

\bibitem[FZ]{fz} D. Fisher and R. Zimmer, {\it Geometric lattice actions, entropy and fundamental groups}. Comment. Math. Helv. vol. 77, pp. 326-338 (2002). 

\bibitem[FrZe]{frze} C. Frances and A. Zeghib, {\it Some remarks on conformal pseudo-Riemannian actions of simple Lie groups}. math. Res. Lett. vol. 12, pp. 49-56 (2005). 



\bibitem[G1]{g1} M. Gromov, {\it Rigid transformation groups}. 
G\'eom\'etrie diff\'erentielle (Paris 1986), pp. 65-139, 
Travaux en Cours, 33, Herman, Paris, 1988. 



\bibitem[NZ1]{nz1} A. Nevo, R.J. Zimmer, {\it Homogenous projective
factors for actions of semisimple Lie groups. } Inven. Math. vol. 138,
pp. 229-252 (1999). 



\bibitem[NZ2]{nz3} A. Nevo, R.J. Zimmer, {\it A structure Theorem
 for actions of semisimple Lie groups. }  Ann.  Math. vol. 156, pp. 565-594 (2002).


\bibitem[NZ3]{nz4} A. Nevo, R. J. Zimmer, {\it A generalization of the intermediate factors theorem}. 
J. Anal. math. vol. 86, pp. 93-104 (2002). 


\bibitem[NZ4]{nz2} A. Nevo, R.J. Zimmer, {\it 
Rigidity of Furstenberg entropy for semisimple Lie group actions.}
Ann. Sci. de \,Ec. Norm. Sup., t. 33, pp. 321-343 (2000). 




\bibitem[QB]{qb} R. Quiroga-Barranco, {\it Isometric actions of simple Lie groups on pseudo-Riemannian manifolds}.  Ann. Math. vol. 164, pp. 941-969 (2006).  





\bibitem[St]{stuck} G. Stuck, {\it Minimal actions of semisimple groups}. Erg. Th. Dyn. Sys. vol. 16, pp. 821-831 (1996). 

\bibitem[St1]{stuck1} G. Stuck, {\it Growth of homogeneous spaces, density of discrete groups and Kazhdan's property $T$}. Invent. Math. vol. 109, pp. 505-517 (1992).  


\bibitem[Va]{Va} V. S. Varadarajan, {\it Lie groups, Lie algebras and their representations}. Graduate Texts in Mathematics, vol. 102, Springer Verlag, 1984. 


\bibitem[Z1]{zbook} R. J. Zimmer, 
\underline{Ergodic Theory and Semisimple Groups}, Birkhauser,
Boston, 1984.

\bibitem[Z2]{zjams} R. J. Zimmer, {\it Representations of fundamental 
groups of manifolds with a semisimple transformation group},
 JAMS {\bf 2}, pp. 201-13, (1989).

\bibitem[Z3]{zproc} R. J. Zimmer, {\it Automorphism groups and fundamental 
groups of geometric manifolds}.
 Proc. Symp. Pure Math. vol. 54, part 3, pp. 693-710 (1993).

\bibitem[Z4]{topsr} R. J. Zimmer, {\it Topological super-rigidity}. Lecture notes, University of Chicago, 1992. 
















\end{thebibliography}
\end{document}